\newcommand{\tens}[1]{%
  \mathbin{\mathop{\otimes}\limits_{#1}}%
}
\newtheorem{rem}{Remark}
\newtheorem{prop}{Proposition}
\newtheorem{lem}{Lemma}
\newtheorem{cor}{Corollary}
\newtheorem{theorem}{Theorem}
\newtheorem{defn}{Definition}
\def\BibTeX{{\rm B\kern-.05em{\sc i\kern-.025em b}\kern-.08em
    T\kern-.1667em\lower.7ex\hbox{E}\kern-.125emX}}
\begin{document}
\title{Safe Dynamic Programming}
\author{Rafal Wisniewski and Manuela L. Bujorianu 
\thanks{The work of the first author has been supported by the Poul Due Jensens Foundation under project SWIft.}
\thanks{R. Wisniewski is with Section of Automation $\&$ Control, Aalborg University, 9220 Aalborg East, Denmark (e-mail: raf@es.aau.dk).}
\thanks{Manuela L. Bujorianu is with Maritime Safety Research Center, Department of Naval Architecture, Ocean $\&$ Marine Engineering, University of Strathclyde, Scotland, UK (e-mail: luminita.bujorianu@strath.ac.uk).}}

\maketitle

\begin{abstract}
We incorporate safety specifications into dynamic programming. Explicitly, we address the minimization problem of a Markov decision process up to a stopping time with safety constraints. To incorporate safety into dynamic programming, we establish a formalism leaning upon the evolution equation. We show how to compute the safety function with the method of dynamic programming. In the last part of the paper, we develop several algorithms for safe dynamic programming.

\end{abstract}

\begin{IEEEkeywords}
Dynamic programming, Markov processes, Optimisation Methods, Safety.
\end{IEEEkeywords}

\section{Introduction}
\label{sec:introduction}
\IEEEPARstart{T}{he} overall objective of the paper is to establish a theoretical background for safe dynamic programming. This calls for  a common technique for dynamic programming and safety. To the best of our knowledge, such a common technique is missing in the literature.  Recently the subject of dynamic programming has enjoyed a resurgence \cite{BertsekasVol1_2017, BertsekasVol2_2018}. The explanation of this increase of popularity is reinforcement learning - a powerful and a prevalent method for learning from data and subsequently generating optimal decisions (see \cite{Babuska10}). In a nutshell, dynamic programming provides the mathematical structure for reinforcement learning. Applications of dynamic programming can be found in robotics \cite{10.1177/0278364913495721}, autonomous vehicles \cite{shalev-shwartz2016safe}, drones \cite{ChangChe-Cheng2020AIoA}, and water networks \cite{jvlIFAC2020}, to name a few examples. In this work, we strive to combine these results with safety \cite{BuWiBou2020}. Safety deals with the subject of assigning the probability of reaching the undesired states - the forbidden set.  The intended result is an optimisation algorithm that keeps the system on the desired safety-level. Specifically, the probability that the process realisations hit the forbidden states before reaching the target set is below a certain value $p$. This is the concept of $p$-safety introduced in \cite{WiBuSL20}.  

We take the starting point of a Markov decision process with a (stationary) policy that for each state provides the probability of choosing a particular control action. Nonetheless, we face a challenge. To compute the optimal path to the target states, we need a random time - the time the process reaches the target set before hitting the forbidden set. Our solution to this challenge is to use the evolution equation~\cite{Helmes01}. It relates the occupation measure with the hitting probability. The occupation measure corresponds to the expected number of the states' visits. When examining the hitting probability, we consider two sets: the set of target states and the set of forbidden states. As the results two functions are derived: the value function $V_\pi$ and the safety function $S_\pi$, both corresponding to a policy $\pi$. The value function is the expected accumulated cost associated to the policy; whereas, the safety function provides the probability of hitting the forbidden set. Interestingly, both functions have a similar form. They comprise the product of the occupation operator (also called the Green operator) and a certain vector. In the case of the value function, this vector is the reward function; and in the case of the safe function, this is the probability of getting to the forbidden set in one step. Hence, not without reason, the occupation operator plays a central role in this work. Indeed, we characterize Markov chains/Markov decision processes using concepts from the probabilistic potential theory, like the infinitesimal generator and its inverse the occupation operator. The use of such concepts leads to elegant analytical proofs of our results.  Importantly, we arrive at a new characterization of the safety function. Specifically, the safety function is the solution of the Bellman's equation and can be computed by an iterative procedure analogous to the one used for computing the value function. Subsequently, we formulate the optimisation with a constraint: the minimisation of the value function $V_{\pi}$ subject to keeping system $p$-safe, $S_\pi \leq p$. As a result, two algorithms for safe dynamic programming are proposed. In the final section, we relax the concept of safety. Subsequently, we develop a local optimisation algorithm; meaning that the control action at each state $i$ is computed only using the information available from its neighbours. By a neighbour of $i$, we understand a state with nonzero transition probability from the state $i$. The salient feature of the Bellman's equation is that it is local.  It transforms the minimization problem over all policies into minimization problems over sets of actions making the space of decision variables much smaller. We introduce a local concept of safety - the relative safety. Equipped with this new concept, we define an optimisation problem. Its solution is local since it is a solution of a certain Bellman's equation.

The subject of minimizing an expected cost without safety constraints is not new; and it is well-known that its solution is obtained by solving the Bellman's equation \cite{BertsekasVol2_2018}.  The problem of safety verification of stochastic systems has also been addressed in the literature~\cite{PrJaPa07}. The paper \cite{BuWiBou2020} has extended the approach based on barrier certificates to discrete settings of Markov chains. The approach of the current paper leans upon elements of probabilistic potential theory. Already in \cite{CDC17} and \cite{CDC19}, it has been shown that the analytical approach based on potential theory provides straightforward proofs for the barrier certificate's properties. The problem of constrained optimisation of a Markov decision process has been addressed before in \cite{Altman99constrainedmarkov}. The proposed solution is a linear programming formulation. In fact, we have used the technique of \cite{Altman99constrainedmarkov} in the proof of Theorem~\ref{Thm.ConstraintOptimisation} showing that minimizing the expected cost subject to the safety constraint has no duality gap. Its trace is also visible in the algorithm formulated in Subsection~\ref{Subsection.LP}. A pragmatic methods for safe dynamic programming and reinforcement learning have been addressed in \cite{Marvi2020}. In the above reference, safety is ensured with a barrier function, which serves as a soft constraint to the system.  On the other hand, the work \cite{li2018safe} proposes a supervisor that prevents the applied control action to drive the system into unsafe regions. 

The list of original contributions of this work includes:
\begin{enumerate}
    \item $p$-safety is re-formulated as a dynamic programming problem.
    \item The occupation operator is introduced into dynamic programming.
    \item The safe dynamic programming is formulated as optimisation with constraints, and we prove that the problem has no duality gap. Therefore, it can be solved in terms of dual optimisation problem.
    \item Two algorithm for solving safe dynamic programming are proposed.
    \item A local concept of $q$-relative safety is introduced.
    \item A local algorithm for (relatively) safe dynamic programming is proposed.
\end{enumerate}

The paper is organised as follows. After introducing the notations in Sections~\ref{Sec.Notations}, we shed light on the preliminary objects of this work: the occupation operator, the occupation measure, the hitting probability, and the evolution equation in Section~\ref{Sec.MarkovChains} and \ref{Sec.EvolutionEquation}. The model of the controlled process adopted in this paper is the Markov decision process. Its description is summarized in Section~\ref{Sec.MDP}. The dynamic programming with stopping time is developed in Section~\ref{Sec.DynamicProgramingStoppingTime}. It is shown that the safety function can be computed as the solution of the Bellman's equation in Section~\ref{Section.Safety}. The main results about safe dynamic programming are formulated in Sections~\ref{Sec.safeDynamicProgramming}. Subsequently, several algorithms for safe dynamic programming are developed in Sections~\ref{Section.SafeDynamicProgramming} and \ref{Sec.LocalSafeDynamicProg}. An illustrative example is provided in Section~\ref{Sec.Example}.

\section{Notation}\label{Sec.Notations}

For a finite set $U$, we write $R^{U} := R^{|U|}$. We use $I_U$ to denote the identity matrix on $U$, and $\mathds{1}_U$ to denote the vector of ones on $U$.
 
Occasionally, we will make use of multi-linear operators (multidimensional matrices) of the form $A = (a_{\alpha_1 \alpha_2 \hdots \alpha_m}) \in \mathds{R}^{n_1} \times \mathds{R}^{n_2} \times \hdots \times \mathds{R}^{n_m}$.  We define $(i,j)$-product of a multidimensional matrix $A$ and $B$ by
\begin{align*}
    A \tens{(i,j)} B = \sum_{\gamma = 1}^{n_i} (a_{\alpha_1 \hdots \alpha_{i-1} \gamma \alpha_{i+1} \hdots  \alpha_m} b_{\beta_1 \hdots \beta_{j-1} \gamma \beta_{j+1} \hdots  \beta_l}). 
\end{align*}
Explicitly, if $A$ and $B$ are standard matrices 
\begin{align*}
AB = A \tens{(2,1)} B.
\end{align*}
We introduce a diagonal operator 
\begin{align*}
\mathrm{diag}_i(A)= (c_{\alpha_1  \hdots \alpha_i \hdots  \alpha_{m-1}}), 
\end{align*}
where $c_{\alpha_1 \hdots \alpha_i \hdots  \alpha_{m-1}} = a_{\alpha_1 \hdots \alpha_{i-1} \alpha_i \alpha_{i} \alpha_{i+2} \hdots  \alpha_m}$.
Specifically, for a standard matrix $A$, $\mathrm{diag}_1(A)$ is the vector consisting of diagonal entries $a_{ii}$ of $A$.

For two vectors $v, w \in \mathds{R}^m$, we will use the Hadamard product $v \circ w$ defined by
\begin{align*}
    (v \circ w)(i) = v(i) w(i). 
\end{align*}
The notation $v \geq 0$ denotes $v(i) \geq 0$ for all $i \in \{1,..., m\}$.

\section{Markov Chains}\label{Sec.MarkovChains}

Let $\mathcal{X}$ be a countable set of states. We denote the states in $\mathcal{X}$ by the letters $i,j$.
The $\sigma $-algebra on $\mathcal{X}$ is the algebra of all its subsets. 
  A probability distribution $\nu$ on $\mathcal{X}$ is a sequence $(\nu (j))_{j\in \mathcal{X}}$ thought of as a row vector $\nu \in \mathds{R}_{\geq 0}^{\mathcal{X}}$. A function $f:\mathcal{X}\rightarrow \mathds{R}$ 
is defined as a column vector $f=(f(j))_{j\in \mathcal{X}}^{\top}$. 

We suppose that $(X_{t}) := (X_{t})_{t \in \mathds{N}}$ is a discrete-time homogeneous Markov chain with the
transition probabilities 
\begin{align}\label{EntriesOfTransitionMtx}
p_{ij} := \mathds{P}[X_{t+1}=j|X_{t} =i]=\mathds{P}[X_1=j|X_{0} =i].
\end{align}
The transition matrix $P$ of $(X_t)$ is $P:=(p_{ij})_{i,j\in \mathcal{X}}$. 
The $k$-step transition probabilities are $\mathds{P}[X_k=j|X_0 =i]= (P^k)_{ij}$, where $P^k =P P ...P$ is the $k$-fold matrix product.

The transition matrix $P$ acts on the right on functions, $f \mapsto Pf$, and on the left on the measures $\nu \mapsto \nu P$.

Let $H$ be an arbitrary subset of $\mathcal{X}$, which will be kept fixed. We restrict the transition probabilities of the Markov chain  $(X_{t})$ to the set $H$.
These are the taboo transition probabilities \cite{Syski73}. We collect the taboo transition probabilities into the transition matrix $Q = (p_{ij})_{i,j \in H}$. In this case, the transition matrix $Q$ is substochastic, i.e., the sum of row entries $\sum_{j \in H} p_{ij} \leq 1$.

We introduce the occupation operator $G$. It will be of paramount importance to the entire article. This is defined as follows:
\begin{align}\label{Green_op}
    G := \sum ^{\infty}_{k=0}Q^k.
\end{align}
The occupation operator is also called the Green operator of the transition matrix $Q$. 
We can write the entries of $G$ as 
\begin{align}\label{Green_fc}
    G(i,j)=\sum^{\infty}_{t=0}\mathds{P}[X_t =j| X_0 = i, ~ X_0, \hdots, X_{t-1} \in H] 
\end{align}
for all $i,j \in H$.
Intuitively, $G(i,j)$ represents the number of visits of the state $j$ starting from $i$ while evolving in the taboo set $H$. Hence, the name `occupation'.
From \eqref{Green_op}, it follows that 
\begin{align} \label{Eq.RelationGreenGenerator}
    G = I_H + QG = I_H + GQ;
\end{align}
recall $I_H$ is the identity matrix on $H$.
For a function $f$ on $H$, $Gf$ is called the potential of the function $f$. $Gf$ is to be understood as the multiplication of the matrix $G$ by the vector representation of $f$.

\section{Evolution of the Markov Chain}
\label{Sec.EvolutionEquation}
To study the reach-avoidance problem (reach the target set while avoiding the forbidden set), we examine the process up to the first hitting time of a target set or a forbidden set. We associate a reward to each state and ask two questions: What is the cost of getting to the target set, and what is the probability that the process reaches the forbidden set before the target set? To this end, we will use the evolution equation relating the occupation measure and the hitting probability, which we characterize first.

\subsection{Occupation measure}

Suppose that $\tau$ is a stopping time and $D$ is a subset of $\mathcal{X}$. Let $\rho_{<\tau}(D)$ be 
a random variable that describes the amount of time the Markov chain spends in $D$ before time $\tau$ has passed. Formally, the occupation variable $\rho_{<\tau}(D)$ is written as 
\begin{align} \label{occ_meas_rv}
    \rho_{<\tau}(D) := \sum_{t=0}^{\tau-1} I_{\{X_t \in D\}}.
\end{align}
The occupation measure $\gamma_{<\tau}$ for $(X_n)$  is defined as the expectation of $\rho_{<\tau}(\cdot)$ in \eqref{occ_meas_rv}, i.e., $\gamma_{<\tau}(D) := \mathds{E}\rho_{<\tau}(D)$. 
From the calculation
\begin{align}\label{occ_meas_est}
    \gamma_{<\tau}(D)=\sum_{t=0}^{\infty} \mathds{P}[t < \tau|X_t \in D],
\end{align}
it follows that $\gamma_{<\tau}$ is indeed a measure on all subsets of $\mathcal{X}$, as its name suggests.
 If $\mu$ is the initial measure ($i$ is the initial state), then we employ the probability $\mathds{P}^{\mu}$ ($\mathds{P}^{i}$), and use
 the notation $\gamma_{<\tau}^{\mu}$ ($\gamma_{<\tau}^{i}$), i.e.,
 \begin{align*}
  \gamma_{<\tau}^{\mu}(D)=\sum_{t=0}^{\infty} \mathds{P}^{\mu}[t < \tau|X_t \in D].
 \end{align*}

We define the integral with respect to $\gamma_{<\tau}$ of a  vector function $f$ as
\begin{align}\label{Eq:IntegrationOccupation}
    \langle \gamma_{<\tau}, f \rangle := \mathds{E}\sum_{t=0}^{\tau-1}f(X_t).
\end{align}

The equation~\eqref{Eq:IntegrationOccupation} will be instrumental for computing the accumulated cost of the process until stopping time $\tau$. Computing the minimal value of $\langle \gamma_{<\tau}, f \rangle$ for different policies is the object of interest of dynamic programming.

\subsection{Hitting probabilities}
For a stopping time $\tau$, let $\lambda_\tau(D)$ be the expected time that the process lies in a set $D \subset \mathcal{X}$ at the time $\tau$,
\begin{align}\label{hit_prob}
    \lambda_\tau(D) := \mathds{P}[\tau< \infty |X_\tau \in D ] =\sum_{t=0}^{\infty} \mathds{P}[\tau=t|~X_t \in D].
\end{align}
When the initial state is $i$, we employ the probability $\mathds{P}^{i}$, and use the notation $\lambda_\tau ^{i}$. Similarly, for the initial probability $\mu$, we use $\mathds{P}^{\mu}$, and the notation $\lambda_\tau ^{\mu}$ for the hitting distribution.

We define the hitting operator corresponding to the stopping time $\tau$ as the integral of a measurable function $f$ with respect to $\lambda_\tau$ as
\begin{align}\label{hitting_op}
   \langle \lambda_\tau , f \rangle = \mathds{E}(f(X_\tau) I_{[\tau<\infty]}).
\end{align}


Specifically, let $E$ and $U$ be disjoint subsets of $X$. We think about $E$ as a target set and $U$ as a forbidden set. Suppose that $\tau = \tau_{U \cup E}$, the first hitting time of the union of $U$ and $E$. Then
\begin{align*}
     \langle \lambda_{\tau_{U \cup E}} , I_U \rangle
\end{align*}
is the probability that that the process hits the forbidden set before the target set. This relation will be instrumental for the computation of safety.

\subsection{Evolution Equation}

Let $\mu$ be an initial distribution on $\mathcal{X}$. For a stopping time $\tau$ of the Markov chain $(X_n)$, as in the previous chapter, let $\gamma_{<\tau}$ denote the occupation measure, and $\lambda_\tau$ the hitting probability.
The connection between the occupation measure and the hitting probability is known in the literature as the adjoint or evolution equation \cite{Helmes01},
\begin{align} \label{ev_eq}
    \lambda_\tau^{\mu} = \mu +\gamma_{<\tau}^{\mu} \mathcal{L},
\end{align}
where $\mathcal{L}$ is the generator,   $\mathcal{L} = P - I$.
 The triplet $(\mu,\gamma_{<\tau}^{\mu}, \lambda_\tau^{\mu})$ characterises in a unique way the underlying Markov process~\cite{Bratt93}.

\section{Markov Decision Processes}\label{Sec.MDP}

We generalise \eqref{EntriesOfTransitionMtx} to Markov Decision Processes. To this end, we suppose that $(U_n)$ is a process with values in a countable set $\mathcal{U}$ of actions, and study the conditional probabilities $\mathds{P}[X_{t+1}=j|~X_{t} =i,~U_{t} = u]$ for $i, j \in \mathcal{X}$, and $u \in \mathcal{U}$.
We remark that Markov property holds for the MDP,
\begin{align}\label{Markov_MDP}
   & \mathds{P}[X_{t+1}=j|~X_0 = i_0,~U_0 = u_0, \hdots  ,X_{t} =i_t,~U_{t} = u_t]  \nonumber \\
    = & \mathds{P}[X_{t+1}=j|~X_{t} =i_t,~U_{t} = u_t],
\end{align}
and introduce transition probabilities
\begin{align*}
	p_{i u j}  = \mathds{P}[X_{t+1}=j|X_{t} =i,~U_{t} = u],
\end{align*}
where $(i,u,j) \in \mathcal{X} \times \mathcal{U} \times \mathcal{X}$. We think about the multidimensional matrix 
$$P_{\mathrm{c}} := (p_{iuj}) := (p_{iuj})_{(i,u,j) \in \mathcal{X} \times \mathcal{U} \times \mathcal{X}}$$ 
as a priori model, where for a given control value $u$, the transition probability from the state $i$ to $j$ is specified. 

By a stochastic policy, we understand the family of stochastic kernels $(\pi_{iu}(t))_{(i,u) \in \mathcal{X} \times \mathcal{U}}$,
\begin{align*}
\pi_{iu} (t) = \mathds{P}[U_t=u|X_t =i].
\end{align*}
We think about the policy $\pi$ as the to-be-designed stochastic control.
In this work, we entirely restrict our attention to stationary policies; the stochastic policy is stationary if $\pi_{iu}$ does not depend on the time, i.e., 
\begin{align*}
\pi_{iu}  = \mathds{P}[U_t=u|X_t =i] = \mathds{P}[U_0=u|X_0 =i].
\end{align*}
To conclude, the stationary policy $\pi$ is seen as the (possibly infinite dimensional) matrix  
\begin{align}\label{Eq.PolicyMatrix}
\pi  := (\pi_{iu})_{(i,u) \in \mathcal{X} \times \mathcal U}    
\end{align}
with entries between $0$ and $1$, corresponding to the probability that at the state $i$, the control action has the value $u$.

Let $\mathcal{U}$ be an ordered set of actions $u$. On the Euclidean space $R^{\mathcal{U}}$, we consider a basis - the unit vectors $(e^u)_{u \in \mathcal{U}}$; $e^u$ has its $u$th coordinate equal to $1$ and the other coordinates equal to $0$. Let $\mathcal{D} $ be the standard simplex in $\mathds{R}^{\mathcal U}$, 
\begin{align}
    \mathcal{D}:=\{\alpha = (\alpha_{u})_{u \in \mathcal{U}}|\alpha_u \geq 0\text{, }\sum_{u \in \mathcal{U}} \alpha_u = 1\}.
\end{align}
Consequently, for each fixed $i$, the transition probabilities  $(\pi_{iu})_{u \in \mathcal{U}} \in \mathcal{D}$.
In the following, we slightly abuse the notation concerning the policy $\pi$. We write $\pi \in \mathcal{X} \times \mathcal{D} = \mathcal{D}^{\mathcal{X}}$ even though  we mean its matrix representation $(\pi_{iu})_{(i,u) \in \mathcal{X} \times \mathcal{U} }$ with $\sum_{u \in \mathcal{U}} \pi_{iu} = 1$ for each $i \in \mathcal{X}$. 
 

For a given policy $\pi$, we use the law of total probabilities and compute the transition probability of the induced Markov chain, 
\begin{align}\label{total_TP}
	p_{ij}(\pi) = \sum_{u \in \mathcal{U}} \pi_{iu} p_{i u j}.
\end{align}
For the policy $\pi$, we define the transition probability matrix 
$$P(\pi) = (p_{ij}(\pi))_{(i,j) \in \mathcal{X} \times \mathcal{X}}.$$

Using the $(i,j)$-product and $\mathrm{diag}_i$ operators, the transition probability matrix $P(\pi)$ is  
\begin{align}\label{Eq.PisAffine}
    P(\pi) = \mathrm{diag}_1 (\pi \tens{(2,3)} P_{\mathrm{c}}).
\end{align}
We conclude that $P(\cdot)$ is a linear map. 

\begin{rem}
Our notation is chosen such that the families are stochastic with respect to the last subscript, i.e., 
\begin{align*}
	\sum_{u \in \mathcal{U}} \pi_{iu} = 1 \hbox{ and }  \sum_{j \in \mathcal{X}} p_{iuj} = 1. 
\end{align*}
\end{rem}
~\\~

We finish this section by extending the Green operator to Markov decision processes. For a policy $\pi$, we define
\begin{align}\label{Eq.DefGreen}
       G(\pi) := \sum ^{\infty}_{k=0}Q(\pi)^k, 
\end{align}
where
$Q(\pi) = (p_{ij}(\pi))_{(i,j) \in H \times H}$. Recall, that $H$ is a taboo set, a subset of $\mathcal{X}$, and that we have restricted the Markov chain  $(X_{t})$ to $H$.

\section{Dynamic Programming with Stopping Time}\label{Sec.DynamicProgramingStoppingTime}


At the outset, we introduce the notion of a reward 
$$\rho:  \mathcal{U} \times \mathcal{X}\to \mathds{R}.$$
The function $\rho$ induces a process $(\rho_t)$ by
\begin{align} \label{Eq.RewardProcess}
	\rho_t = \rho(U_t,X_t).
\end{align}
We suppose that the process $(U_t)$ is generated by a policy $\pi$. Let $\tau$ be a stopping time, for example the first hitting time of a set. The cost for the policy $\pi$ until time $\tau$ is
\begin{align}\label{Eq.OptimisationFunction}
    V_{\pi}(i) := \mathds E_{\pi} \left[\sum_{t=0}^{\tau} \rho_t| X_0 = i\right],
\end{align}
and the vector $V_{\pi}=(V_{\pi}(i))_{i \in \mathcal{X}}$. 

The aim of this section is to evaluate the cost function $V_{\pi}$, and subsequently to find a minimizing static policy. All the objects used below will depend on the policy $\pi$; herein, the expectation, the transition matrix, occupation measure, and hitting probability. Therefore to enhance readability, we occasionally suppress $\pi$ from the notation. 

At the outset, we notice that in \eqref{Eq.RewardProcess}, $\tau$ is a random variable; therefore, the expectation operator cannot be moved under the summation symbol, as it is customarily done in standard dynamic programming/reinforcement learning (see \cite{BertsekasVol2_2018} and \cite{Babuska10}). Instead, to evaluate the cost function $V_{\pi}$ in \eqref{Eq.OptimisationFunction}, we employ the evolution equation introduced in Section \ref{Sec.EvolutionEquation},
\begin{align} \label{ev_eq_function}
    \langle \lambda_\tau^{\mu}, f \rangle = \langle \mu, f \rangle + \langle \gamma_{<\tau}^{\mu}, \mathcal{L} f \rangle.
\end{align}
This technique will be visible in the proof of the next proposition. In accordance with our convention, all the measures and the generator $\mathcal{L}$ in \eqref{ev_eq_function} depend on the policy $\pi$. 
~\\~

\begin{prop}\label{RelationGreenAndValue}
For a Markov decision process $(X_t)$ on the set $\mathcal{X}$ of states and the action set $\mathcal{U}$, let $\pi$ be a fixed policy and $\rho$ be the reward. Let $E \subset \mathcal{X}$ be a target set, and  $H := \mathcal{X} \setminus E$. Suppose that
\begin{enumerate}
    \item the decision process $X_t$ with policy $\pi$ is transient on $H$, 
    $$\mathds{P}_{\pi}\left[X_{t} \in H \text { for infinitely many } t \mid X_{0} \in H \right]=0;$$
    \item $\rho(u, e) = 0$ for all $u \in \mathcal{U}$ and all $e \in E$.
\end{enumerate}
Suppose that $\tau := \tau_{E}$ is the first hitting time of the target set $E$, i.e., $\tau_{E} := \min\{t \geq 0|~X_t \in E\}$.

Let $ R_{\pi} := \mathrm{diag}_1( \pi \rho)|_H,$ where $\pi$ is the policy matrix in \eqref{Eq.PolicyMatrix}, and $\rho = (\rho(u,i))_{(u,i) \in \mathcal{U} \times \mathcal{X}}$; or in components $R_{\pi}(i) = \sum_{u\in\mathcal{U}} \pi_{iu} \rho(u,i) $.

Then the cost function $V_{\pi}$ in \eqref{Eq.OptimisationFunction} restricted to $H$ is 
\begin{align*}
    V_{\pi}|_{H} = G(\pi) R_{\pi},
\end{align*}
where $G(\pi)$ is the Green kernel in \eqref{Eq.DefGreen} for the set $H$, and $V_{\pi}(e) = 0$ for $e\in E$.
\end{prop}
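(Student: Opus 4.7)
The plan is to express $V_\pi$ as the integral of the one-step reward $R_\pi$ against the occupation measure, and then identify that measure with the Green operator. At the outset I would use hypothesis (2): since $X_{\tau_E} \in E$ on $\{\tau_E < \infty\}$ and $\rho(u,e)=0$ for all $u$ and $e \in E$, the terminal term $\rho_{\tau_E}$ vanishes almost surely, so $V_\pi(i)=\mathds{E}_\pi^i\!\left[\sum_{t=0}^{\tau_E-1}\rho_t\right]$. The transience hypothesis (1) guarantees $\tau_E<\infty$ a.s. when $i\in H$, which I will use to interchange expectation and the infinite sum.

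Next I would integrate out the control via the tower property. Because the policy is stationary, conditional on the trajectory $\sigma(X_0,\ldots,X_t)$, the variable $U_t$ is drawn from $\pi_{X_t,\cdot}$, so
\[
\mathds{E}_\pi\!\bigl[\rho(U_t,X_t)\, I_{\{t<\tau_E\}}\,\big|\,\sigma(X_0,\ldots,X_t)\bigr]
= I_{\{t<\tau_E\}} \sum_{u\in\mathcal{U}}\pi_{X_t,u}\,\rho(u,X_t)
= I_{\{t<\tau_E\}}\,R_\pi(X_t).
\]
Summing over $t$ and using definition \eqref{Eq:IntegrationOccupation} yields $V_\pi(i)=\langle \gamma_{<\tau_E}^{i}, R_\pi\rangle$, exactly the integrand that the occupation measure was designed to handle.

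Finally, I would identify $\gamma_{<\tau_E}^{i}$ with the Green kernel on $H$. The equivalence $\{t<\tau_E\}=\{X_0,\ldots,X_t\in H\}$ shows that $\gamma_{<\tau_E}^{i}$ is supported on $H$, and that for $i,j\in H$
\[
\gamma_{<\tau_E}^{i}(\{j\})=\sum_{t=0}^{\infty}\mathds{P}_\pi^{i}\bigl[X_t=j,\; X_0,\ldots,X_{t-1}\in H\bigr]=\sum_{t=0}^{\infty}Q(\pi)^{t}_{ij}=G(\pi)_{ij}.
\]
Plugging in gives $V_\pi(i)=\sum_{j\in H}G(\pi)_{ij}R_\pi(j)=(G(\pi)R_\pi)(i)$ for $i\in H$. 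For $e\in E$ one has $\tau_E=0$, whence $V_\pi(e)=\rho(U_0,e)=0$ by hypothesis (2).

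The main obstacle is the legitimacy of the interchange of expectation and the potentially infinite sum, and of the swap of summation over $t$ with the one over $j$ in the Green representation. Both rely on the transience assumption (1): it makes $G(\pi)$ a finite operator on $H$, ensures that $\tau_E$ is almost surely finite starting from $H$, and justifies the Fubini step that powers the whole calculation; without it, the reshuffling of the series is merely formal.
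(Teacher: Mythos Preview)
Your argument is correct. Both you and the paper first reduce to $V_\pi(i)=\langle\gamma_{<\tau_E}^{i},R_\pi\rangle$ by integrating out the action with the stationary policy, but you then diverge in how you identify the occupation measure with the Green kernel. The paper invokes the evolution equation \eqref{ev_eq}: it embeds $G(\pi)$ in a block matrix $\tilde G$ on $\mathcal X$, computes $\mathcal L\tilde G$, and from $\lambda_\tau^\mu|_H=0$ reads off $\gamma_{<\tau}^\mu|_H=\mu|_H G(\pi)$, which after pairing with $R_\pi$ gives the result. You instead work directly from the taboo description $\{t<\tau_E\}=\{X_0,\ldots,X_t\in H\}$ and the power-series definition $G(\pi)=\sum_kQ(\pi)^k$, reading off $\gamma_{<\tau_E}^i(\{j\})=G(\pi)_{ij}$ termwise. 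Your route is shorter and more elementary, needing nothing beyond Tonelli for the nonnegative interchange; the paper's route is chosen to illustrate that the evolution equation already encodes this identity, which pays off later when the same machinery is reused for the safety function in Section~\ref{Section.Safety}.
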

~\\~

\begin{proof}
We write $\mathds{P} := \mathds{P}_{\pi}$, and $\mathds{E} := \mathds{E}_{\pi}$.
We define the following random variable 
\begin{align*}
    r_{\pi}(X_t):= \mathds{E} [\rho(U_t,X_t) | X_t] = \sum_{u \in \mathcal U}\rho(u,X_t) \mathds{P}[U_t = u|X_t]
\end{align*}
We claim that
\begin{align*}
   V_{\pi}(i) & = \mathds{E}\left [ \sum_{t=0}^{\tau} \rho_t|X_0 = i \right ] =  \mathds{E}\left [ \sum_{t=0}^{\tau} r_{\pi}(X_t)|X_0 = i \right ].
\end{align*}
The claim follows from
\begin{align}
     V_{\pi}(i)  & = \sum_{k = 0}^{\infty} \mathds{E} \left[ \sum_{t=0}^{k} \rho_t|X_0 = i \right] \mathds{P}[\tau = k|X_0 = i] \nonumber \\
     & = \sum_{k = 0}^{\infty} \sum_{t=0}^{k} \mathds{E} \left[\rho_t|X_0 = i \right] \mathds{P}[\tau = k|X_0 = i]. \label{Eq.HowToRepresentCost}
 \end{align}    
We observe that
\begin{align*}
    \mathds{E} \left[\rho_t|X_0 = i \right] = \sum_{j\in \mathcal{X}}\mathds{E} [\rho_t|X_t = j] \mathds{P}[X_t=j| X_0 = i],
\end{align*}
and
\begin{align*}
     \mathds{E}[\rho_t|X_t = j] 
     & = \sum_{u \in \mathcal{U}}\rho(u,j) \mathds{P}[U_t = u|X_t = j] \\
     & = \mathds{E}[r_{\pi}(X_t)|X_t = j] =  r_{\pi}(j). 
\end{align*}
Hence, the claim follows from substituting $\mathds{E}_{\pi}[\rho_t|X_t = j]$ by $r_{\pi}(j)$ in \eqref{Eq.HowToRepresentCost}.
Since $R_{\pi}(i) = r_{\pi}(i)$ for $i \in E$, from \eqref{Eq:IntegrationOccupation}, we conclude that 
\begin{align*}
       V_{\pi}(i) & = \mathds{E}\left [ \sum_{t=0}^{\tau} r_{\pi}(X_t)|X_0 = i \right ] \\
       & = \langle \gamma_{< \tau}, R_{\pi} \rangle + \mathds{E}_{\pi}[r_{\pi}(X_\tau)|X_0 = i].
\end{align*}
As $\rho(u, e) = 0$ for all $u \in \mathcal{U}$ and all $e \in E$, we have
\begin{align*}
       V_{\pi}(i) = \langle \gamma_{< \tau}^i, R_{\pi} \rangle.
\end{align*}

In the second part of the proof, we will use the evolution equation \eqref{ev_eq}, to evaluate $\langle \gamma_{< \tau}^{\mu}, R(\pi) \rangle$.
We claim that
\begin{align*}
    0 = \langle \mu|_H, G(\pi) f|_H \rangle - \langle \gamma_{<\tau}^{\mu},  f|_H \rangle,
\end{align*}
for any $f$ such that $f(e) = 0$ for all $e \in E$, and for any initial measure $\mu$.
The claim leads us to the conclusion
\begin{align*}
    0 = G(\pi) R_{\pi} - V_{\pi}|_{H}.
\end{align*}

To prove the claim, without loss of the generality, we suppose that the states are numbered such that the first states belong to $H$ and the remaining to $E$. We decompose the transition matrix $P := P(\pi)$ (possibly infinite dimensional) as follows
\begin{align*}
    P = \begin{bmatrix} Q & P_E^H \\ P_H^E & P_E^E\end{bmatrix}.
\end{align*}
Consequently,
\begin{align*}
    \mathcal{L} = \begin{bmatrix} Q - I_H & P_E^H \\ P_H^E & P_E^E - I_E\end{bmatrix},
\end{align*}
where $I_H$ is the identity matrix on $H$, similarly $I_E$ is the identity matrix on $E$, and $Q := P(\pi)|_H$, as above.
We define a matrix
$$
\tilde{G}=\left[\begin{array}{ll}G & 0 \\ 0 & 0\end{array}\right],
$$
where $G$ is the Green operator defined in \eqref{Green_op}.
By the relation \eqref{Eq.RelationGreenGenerator},
$$\mathcal{L} \tilde{G}= - \left[\begin{array}{cc}I_H & 0 \\ 0 & 0\end{array}\right],$$
and
\begin{align*} 
    \lambda_\tau^{\mu}|_H G = \mu|_H G - \gamma_{<\tau}^{\mu}|_H,
\end{align*}
On the other hand, $\tau$ is the first hitting time of $E$, therefore $\lambda_\tau^{\mu}|_H = 0$. In conclusion,
\begin{align*} 
    0 = \mu|_H G - \gamma_{<\tau}^{\mu}|_H.
\end{align*}
Suppose that $f = \begin{bmatrix} f|_H & f|_E\end{bmatrix}^{\mathrm{T}} = \begin{bmatrix} f|_H & 0 \end{bmatrix}^{\mathrm{T}}$ in \eqref{ev_eq_function}, 
\begin{align*}
    0 = \langle \mu|_H, G f|_H \rangle - \langle \gamma_{<T}^{\mu},  f|_H \rangle.
\end{align*}
This proves the claim as $f|_H$ is arbitrary. We conclude that $V_{\pi}|_{H} = G(\pi) R_{\pi}$. 
\end{proof}
~\\~


As in Section~\ref{Sec.MDP}, we characterize the set of policies 
$$\pi = (\pi_{iu})_{(i,u) \in H \times \mathcal{U}} \in \mathcal{D}^H := \underbrace{
\mathcal{D} \times \ldots \times \mathcal{D}}_{|H| \hbox{ times}},$$ 
since $\sum_{u\in \mathcal{U}} \pi_{(i,u)} = 1$ for all $i \in H$.

We strive to solve the following optimisation problem 
\begin{align}\label{Eq.ValueFunctionCompact}
   V^*(i) = \min_{\pi \in \mathcal{D}^{H}} V_\pi(i).
\end{align}
We call $V^*$ the value function.

We observe that $G(\pi) = I_H + G(\pi)Q(\pi)$,  hence 
\begin{align*}
    V_{\pi}|_{H} = R_{\pi} + Q(\pi)G(\pi)R_{\pi},
\end{align*}
and the result is the celebrated formula known in dynamic programming
\begin{align}\label{Eq.BellmanPolicy}
    V_{\pi}|_{H}  = R_{\pi} + Q(\pi)V_{\pi}|_H.
\end{align}
with the boundary condition $V_{\pi}(e) = 0$ for $e \in E$. 

We introduce the discrete Laplacian operator $\Delta(\pi)=-\mathcal{L}(\pi) = I - Q(\pi)$, and write \eqref{Eq.BellmanPolicy} as
\begin{align*}
    \Delta(\pi) V_{\pi}|_{H}  = R_{\pi}.
\end{align*}
Concretely, if the reward function $\rho$ is non-negative then $V_{\pi}|_H$ is a superharmonic function.

The solution of \eqref{Eq.BellmanPolicy} is unique since for another solution $W: \mathcal{X} \to \mathds{R}$, 
\begin{align}\label{Eq.ResultDirichletProblem}
   \Delta(\pi) (V_{\pi} -W)=0 \hbox{ on } H,
\end{align}
where $\Delta(\pi)=I - Q(\pi)$ is the discrete Laplacian operator, with the boundary condition $V_{\pi}(\tau)-W(\tau)=0$. We conclude that \eqref{Eq.ResultDirichletProblem} is the Dirichlet problem which has the unique solution $V_{\pi} -W = 0$.

The function $V_{\pi}$ can be computed as the limit of the sequence $(V_{\pi}^{n})$ defined by the iteration
\begin{align} \label{Eq.IterativeValue}
    V_{\pi}^{n+1} =  R_{\pi} + Q(\pi)V_{\pi}^{n},
\end{align}
with an arbitrary initial condition $V_{\pi}^{0}$. Unfolding~\eqref{Eq.IterativeValue} gives 
\begin{align}
    V_{\pi}^{n} =  \sum_{k=0}^n Q^k(\pi) R_{\pi} + Q(\pi)^n V_{\pi}^{0}. 
\end{align}
The sequence  $(V_{\pi}^{n})_{n \in \mathds{N}}$ converges to
\begin{align*}
    V_{\pi}^{\infty} = G(\pi) R_{\pi}.
\end{align*}
as $Q(\pi)$ is the restriction of the transition matrix on $H$ corresponding to the transient chain; thereby, $Q(\pi)^n \to 0$.
In conclusion,  $V_{\pi}^{\infty} =  V_{\pi}|_{H}.$

We recall here the standard results of reinforcement learning \cite{BertsekasVol2_2018}, which will be instrumental in the following sections. The proof sheds light on the usefulness of the Green operator.  
~\\~
\begin{lem}\label{Lemma.RLAlgorithms}
Suppose that there is at least one policy $\pi$ that the premises of Lemma~\ref{RelationGreenAndValue} are satisfied. 
The optimal cost $V^*$ restricted to the set $H$ satisfies Bellman's equation
\begin{equation} \label{eq.BellmanEquationcompact}
    V = \min_{\pi \in \mathcal{D}^{H}} \left[ R_{\pi} + Q(\pi)V \right].
\end{equation}

Furthermore, the function $V^{*}|_{H}$ is the (coordinate-wise) limit of the sequence $(V^n)$ defined by
\begin{align}\label{Eq.IterativeBellaman1}
    V^{n+1} = \min_{\pi \in \mathcal{D}^{H}} \left[ R_{\pi} + Q(\pi)V^n \right]
\end{align}
with an arbitrary initial condition $V^0 \geq 0$. 
\end{lem}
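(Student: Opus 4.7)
The plan is to introduce the Bellman operator $T(V) := \min_{\pi \in \mathcal{D}^H}[R_\pi + Q(\pi)V]$ and, for each fixed $\pi$, the policy evaluation operator $T_\pi(V) := R_\pi + Q(\pi)V$. Both are monotone in $V$ coordinatewise because $Q(\pi) \geq 0$, and by (\ref{Eq.BellmanPolicy}) together with the Dirichlet-type uniqueness argument recorded right after it, $V_\pi|_H$ is the unique fixed point of $T_\pi$ whenever $\pi$ is transient. The entire argument leverages these monotonicity facts together with the unfolding $T_\pi^n(V) = Q(\pi)^n V + \sum_{k=0}^{n-1} Q(\pi)^k R_\pi$, which tends to $G(\pi) R_\pi = V_\pi|_H$ for transient $\pi$ since $Q(\pi)^n \to 0$.

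For the Bellman identity (\ref{eq.BellmanEquationcompact}) I split the equality into two inequalities. The direction $V^*|_H \geq T(V^*|_H)$ follows since $V^*|_H \leq V_\pi|_H$ entrywise together with $Q(\pi) \geq 0$ gives $R_\pi + Q(\pi) V^*|_H \leq R_\pi + Q(\pi) V_\pi|_H = V_\pi|_H$, and minimising over $\pi$ on both sides yields $T(V^*|_H) \leq V^*|_H$. For the reverse direction, let $\tilde\pi$ attain the minimum, so that $T_{\tilde\pi}(V^*|_H) = T(V^*|_H) \leq V^*|_H$; monotonicity of $T_{\tilde\pi}$ then makes the sequence $T_{\tilde\pi}^n(V^*|_H)$ decreasing in $n$, with limit $V_{\tilde\pi}|_H$ by the unfolding above. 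Hence $V_{\tilde\pi}|_H \leq V^*|_H$, and combined with the opposite inequality holding by definition of $V^*$ this forces $V_{\tilde\pi}|_H = V^*|_H$, after which $T(V^*|_H) = T_{\tilde\pi}(V_{\tilde\pi}|_H) = V_{\tilde\pi}|_H = V^*|_H$.

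Convergence of the iteration (\ref{Eq.IterativeBellaman1}) I would establish by sandwiching. For the upper bound, let $\pi^*$ denote the optimal (hence transient) policy produced above; monotonicity and optimality give $V^{n+1} = T(V^n) \leq T_{\pi^*}(V^n)$, so by induction $V^n \leq T_{\pi^*}^n(V^0) = Q(\pi^*)^n V^0 + \sum_{k=0}^{n-1}Q(\pi^*)^k R_{\pi^*}$, which tends to $V_{\pi^*}|_H = V^*|_H$ using transience of $\pi^*$ together with $V^0 \geq 0$. For the lower bound, I would identify $V^n(i)$ with the optimal value of the truncated $n$-horizon problem on $H$ with terminal cost $V^0 \geq 0$, and argue, via transience of $\pi^*$ so that $\mathds{P}_{\pi^*}[\tau > n] \to 0$, that the truncation gap $V^*|_H - V^n$ vanishes coordinatewise.

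The main obstacle I anticipate is ensuring in the reverse half of the Bellman step that the minimiser $\tilde\pi$ is itself transient on $H$, so that $T_{\tilde\pi}^n(V^*|_H)$ actually converges to a finite $V_{\tilde\pi}|_H$ rather than blowing up. I would handle this by compactness of $\mathcal{D}^H$ as a product of simplices together with continuity of $\pi \mapsto R_\pi + Q(\pi) V^*|_H$, the latter afforded by the affinity in (\ref{Eq.PisAffine}); and by noting that any non-transient $\pi$ has some coordinate of $G(\pi) R_\pi$ divergent, so such a $\pi$ cannot undercut the transient policy guaranteed by the hypothesis, and therefore the minimum is attained on the transient subset.
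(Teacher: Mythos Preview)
Your argument for the Bellman identity is correct and close in spirit to the paper's: both iterate $T_{\tilde\pi}$ and use the unfolding $T_{\tilde\pi}^n \to G(\tilde\pi)R_{\tilde\pi}$, although the paper packages this as a uniqueness statement (any $V$ solving (\ref{eq.BellmanEquationcompact}) satisfies $V \leq G(\tilde\pi)R_{\tilde\pi}=V_{\tilde\pi}|_H$ for every $\tilde\pi$, hence $V\leq V^*|_H$) rather than directly verifying the two inequalities for $V^*|_H$.

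The convergence part is where your route genuinely diverges. The paper does \emph{not} fix a single optimal $\pi^*$. Instead it builds the greedy sequence $\pi^n\in\arg\min_\pi[R_\pi+Q(\pi)V^n]$ and the associated costs $W^n:=G(\pi^n)R_{\pi^n}=V_{\pi^n}|_H$, observes that iterating $T_{\pi^n}$ from $W^n_0:=V^n$ yields $W^n_m\geq T^mV^n=V^{n+m}$ with $W^n_m\to W^n$, and then argues $W^n$ decreases to the Bellman fixed point; a short contradiction argument closes the lower bound. The advantage of the paper's scheme is that the upper bound on $\limsup V^n$ is obtained \emph{before} any optimal policy has been exhibited, so it does not rely on the existence or transience of $\pi^*$. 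Your upper bound via a fixed $\pi^*$ is cleaner once $\pi^*$ is already in hand, but your lower bound is only sketched and, as written, has a gap: $V^n$ is the $n$-horizon optimum over \emph{all} (possibly non-transient, non-stationary) policies, so transience of the single policy $\pi^*$ does not by itself bound $V^*|_H-V^n$ from below. To repair this you would need either an explicit non-negativity assumption on $R_\pi$ (so that $\sum_{k<n}Q(\pi)^kR_\pi$ is monotone in $n$ uniformly in $\pi$) or a compactness/extraction argument producing a limiting stationary policy from the $n$-step minimisers; the paper's greedy-policy construction sidesteps this issue.
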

~\\~

\begin{proof}
By hypothesis $V^*$ is finite. We frequently and without mentioning use the observation that if $V \geq 0$ component-wise, and so is $Q(\pi)V \geq 0$.

Firstly, we will show that if $V$ satisfies \eqref{eq.BellmanEquationcompact} then $V = V^*|_H$. To this aim, it is enough to show that $V \leq V^*|_H$. For any ${\tilde \pi \in \mathcal{D}^{H}}$, we have 
\begin{align*}
    V &= \min _{\pi \in \mathcal{D}^{H}} \left[ R(\pi) + Q(\pi)V \right] \leq R(\tilde \pi) + Q(\tilde \pi)V \\
    & =  R(\tilde \pi) + Q(\tilde \pi) \left(
    \min _{\pi \in \mathcal{D}^{H}} \left[ R(\pi) + Q(\pi)V \right] \right)\\
    & \leq R(\tilde \pi) + Q(\tilde \pi) \left( R(\tilde \pi) + Q(\tilde \pi)V \right) \leq G(\tilde \pi) R( \tilde \pi) \\ &= V_{\tilde \pi}|_{H}.
\end{align*}
In the above, the components of $V_{\tilde \pi}$ might be infinite.

We will show that the sequence $(V^n)$ converges to $V$. To this end, we construct two sequences $(\pi^n)$ and $(W^n)$ by 
\begin{align*}
    \pi^n \in \arg \min_{\pi \in \mathcal{D}^{H}} \left[ R(\pi) + Q(\pi)V^n \right]
\end{align*}
and $W^n = G(\pi^n) R(\pi^n)$. For each $n$, a sequence $(W_m^n)_{m \in \mathds{N}}$ defined by
\begin{align*}
    W^n_{m+1} := R(\pi^n) + Q(\pi^n) W^n_{m} 
\end{align*}
with $W^n_0 = V^n$ converges to $W^n$.

We define an operator $T$ acting on the functions on $H$ by
\begin{align*}
    T: f  \mapsto \min_{\pi \in \mathcal{D}^{H}} \left[ R(\pi) + Q(\pi)f \right]
\end{align*}
for $f \in \mathds{R}^H$. 
We conclude that
\begin{align*}
    W^n_{m} \geq T^m V^n = V^{m+n},
\end{align*}
and $W^n \geq W^{n+1}$. Furthermore,
\begin{align*}
    \lim_{n \to \infty} W^n = V.
\end{align*}
By contradiction argument, we also conclude that
\begin{align*}
    \lim_{n\to\infty} T^n V^0 = V.
\end{align*}
\end{proof}

We will shed light on \eqref{Eq.IterativeBellaman1}. To this end, we use the following operator
\begin{align*}
    T: V  \mapsto \min_{\pi \in \mathcal{D}^{H}} \left[ R(\pi) + Q(\pi)V \right],
\end{align*}
and study the components $TV(i)$ for fixed $i \in H$,
\begin{align*}
    R_{\pi}(i) = \sum_{u \in \mathcal{U}} \pi_{iu} \rho(u,i),
\end{align*}
and 
\begin{align*}
    (Q(\pi)V)(i) = \sum_{j \in H} \sum_{u \in \mathcal{U}} \pi_{iu} p_{i u j} V(j).
\end{align*}
We observe that $R_{\pi}(i)$ and  $(Q(\pi)V)(i)$ depend only on the distribution of $\pi_{i} \in \mathcal{D}$. Hence, we write
\begin{align*}
TV(i) = \min_{\pi_{i} \in \mathcal{D}}   R(\pi)(i)  + (Q(\pi)V)(i),
\end{align*}
and conclude that optimisation at the state $i$ is local - it involves only the local information of the probability distribution of the policy at $i$.  

In the last part of this section, we follow the idea of \cite{Altman99constrainedmarkov}, and show that the value function \eqref{Eq.ValueFunctionCompact} is the largest among the functions  $V: E \to \mathds{R}$ satisfying the inequality
\begin{align*}
    \Delta(\pi) V \leq R_{\pi} \hbox{ for all } \pi \in \mathcal{D}^{H}.
\end{align*}

~\\~
\begin{lem}\label{SupremumValueunction}
The value function satisfies
\begin{align}\label{Eq.ValueSupremumFormula0}
    V^*|_H= \sup \mathcal{V},
\end{align}
where
\begin{align*}
    \mathcal{V} := \{V \in \mathds{R}^H |~\Delta(\pi) V \leq R_{\pi} \hbox{ for all } \pi \in \mathcal{D}^{H}\},
\end{align*}
and $\sup$ is to be understood coordinate-wise.
\end{lem}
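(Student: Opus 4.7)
The plan is to rewrite membership in $\mathcal{V}$ in terms of the Bellman operator $T$ introduced just above the lemma, and then verify the two inclusions $V^*|_H \in \mathcal{V}$ and $V \leq V^*|_H$ for every $V \in \mathcal{V}$ separately. Since $\Delta(\pi) = I - Q(\pi)$, the defining inequality becomes $V \leq R_\pi + Q(\pi) V$ for every $\pi \in \mathcal{D}^H$, which is equivalent to $V \leq TV$ because $T$ is a pointwise infimum and the infimum is attained coordinate-wise (the minimization decouples over states, as noted just before the lemma).

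For the first inclusion, I would simply invoke Bellman's equation from Lemma~\ref{Lemma.RLAlgorithms}: $V^*|_H = TV^*|_H$, so in particular $V^*|_H \leq R_\pi + Q(\pi) V^*|_H$ for every $\pi$, i.e.\ $\Delta(\pi) V^*|_H \leq R_\pi$. Hence $V^*|_H \in \mathcal{V}$, which already gives $V^*|_H \leq \sup \mathcal{V}$.

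For the reverse inequality, fix any $V \in \mathcal{V}$ and any policy $\tilde\pi \in \mathcal{D}^H$ satisfying the transience premise of Proposition~\ref{RelationGreenAndValue}. The definition of $\mathcal{V}$ yields $V \leq R_{\tilde\pi} + Q(\tilde\pi) V$. Iterating this inequality, using that $Q(\tilde\pi)$ has nonnegative entries (so it preserves coordinatewise inequalities), I obtain by an easy induction
\begin{align*}
V \leq \sum_{k=0}^{n} Q(\tilde\pi)^k R_{\tilde\pi} + Q(\tilde\pi)^{n+1} V
\end{align*}
for every $n \in \mathds{N}$. Transience forces $Q(\tilde\pi)^{n+1} \to 0$ entrywise, so the second term vanishes (recall that $V$ is finite-valued since $V \in \mathds{R}^H$), while the first term converges to $G(\tilde\pi) R_{\tilde\pi} = V_{\tilde\pi}|_H$ by Proposition~\ref{RelationGreenAndValue}. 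Thus $V \leq V_{\tilde\pi}|_H$. Taking the infimum over admissible $\tilde\pi$ gives $V \leq V^*|_H$, and since $V \in \mathcal{V}$ was arbitrary we conclude $\sup \mathcal{V} \leq V^*|_H$.

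The main subtlety lies in handling the term $Q(\tilde\pi)^{n+1} V$ in the tail of the iteration: it is essential that $V$ is finite-valued on $H$ and that the entries of $Q(\tilde\pi)^n$ vanish coordinatewise under the transience hypothesis, so that the induction's residual actually dies in the limit and does not contribute a spurious lower bound. Once this is in place, no additional regularity on $V$ (e.g.\ nonnegativity, as in the value iteration initialisation of Lemma~\ref{Lemma.RLAlgorithms}) is required.
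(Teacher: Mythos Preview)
Your proof is correct and follows essentially the same two-step strategy as the paper: first show $V^*|_H \in \mathcal{V}$ via Bellman's equation, then show $V \leq V_{\tilde\pi}|_H$ for every $V \in \mathcal{V}$ and every admissible $\tilde\pi$, and take the infimum. The only cosmetic difference is in the second step: the paper introduces the slack $a_{\tilde\pi} := R_{\tilde\pi} - \Delta(\tilde\pi)V \geq 0$ and applies the Green operator in one shot to obtain $V_{\tilde\pi} = V + G(\tilde\pi)a_{\tilde\pi} \geq V$, whereas you unfold the same computation as a finite iteration and pass to the limit---these are the same argument, since $G(\tilde\pi)a_{\tilde\pi} = \lim_n \sum_{k=0}^n Q(\tilde\pi)^k a_{\tilde\pi}$ is exactly what your induction produces.
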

~\\~

\begin{proof}
Suppose that $V \in \mathcal{V}$. Then for all $\pi  \in \mathcal{D}^{H}$, we have
\begin{align*}
    \Delta(\pi) V + a_{\pi} = R_{\pi},
\end{align*}
where $a_{\pi} \geq 0$. We write
\begin{align*}
    \Delta(\pi) ( V + G(\pi) a_{\pi}) = R_{\pi}.
\end{align*}
Therefore, there is $V_{\pi}$ such that $\Delta(\pi) V_{\pi} = R_{\pi}$. Since
\begin{align*}
    V_{\pi} = V + G(\pi) a_{\pi},
\end{align*}
and $G(\pi)$ is a positive matrix (it has non-negative entries), we have 
\begin{align*}
    V_{\pi} \geq V.
\end{align*}
Since the above equality holds for an arbitrary policy, it follows that 
\begin{align*}
    V^*|_H \geq V.
\end{align*}
for any $V$ in $\mathcal{V}$. 
On the other hand, the value function $V^*$ satisfies the Bellman's equation
\begin{align*}
    \min_{\pi \in \mathcal{D}^H} \Delta(\pi) V^*|_H = R_{\pi}.
\end{align*}
Hence,
\begin{align*}
    \Delta(\pi) V^*|_H \geq R_{\pi} \hbox{ for all } \pi \in \mathcal{D}^{H}.
\end{align*}
We conclude that
 \begin{align*}
     V^*|_H \in \mathcal{V}.
 \end{align*}
Thereby, \eqref{Eq.ValueSupremumFormula0} follows. 
\end{proof}

\section{Safety}\label{Section.Safety}
In this section, we formulate safety as a dynamic programming problem. 
In the previous section, we have considered the terminal set $E$ and its complement, the taboo set $H$. We extend this situation by adding an extra set $U$, the set of forbidden states. We suppose that $U$ is disjoint from $E$. Now, the taboo set is $H = \mathcal{X} \setminus  (U \cup E)$. 

We follow in this section the definition of safety from \cite{BuWiBou2020}. For each state in $\mathcal{X}$, the safety function gives the  probability that the realisations hit the forbidden set $U$ before reaching the target set $E$.  

We consider the problem of finding a policy $\pi$ such that the safety function
\begin{align*}
	S_{\pi}(i) := \mathds{P}^i[\tau_U < \tau_{E}]=\mathds{P}[\tau_U < \tau_{E}|~X_0 = i] \leq p,
\end{align*}
where  $\tau_A$ is the first hitting time of a set $A$. 
We have again suppressed the policy $\pi$ in the notation, $\mathds{P} = \mathds{P}(\pi)$.

To compute the safety function $S_{\pi}$, we  make again use of the evolution equation \eqref{ev_eq_function}  with the initial distribution $\mu$ concentrated at $i$, and the stopping time $\tau = \tau_{E \cup U}$, i.e., the first hitting time of the union $E \cup U$,
\begin{align*} 
    \langle \lambda_\tau^i, f \rangle = f(i)  + \langle \gamma_{<\tau}^{i}, \mathcal{L}(\pi) f \rangle.
\end{align*}
We observe that the safety function $S_{\pi}(i)=\lambda_{\tau}^{i}(U)$.
We unfold the evolution equation,
\begin{align}\label{Eq.MeasureDirichlet}
    \sum_{k \in U \cup E} \lambda_{\tau}^{i}(k) f(k)=f(i)+\sum_{j \in H} \gamma_{< \tau}^{i}(j) (\mathcal{L}(\pi) f)(j).
\end{align}
Explicitly, for the function $f$ such that $f(j) = 0$ for $j \in E$, $f(j) = 1$ for $j \in U$, and $(\mathcal{L}(\pi) f)(j) = 0$ for $j \in H$, we have
\begin{align*}
    \sum_{k \in U} \lambda_{\tau}^{i}(k) =f(i).
\end{align*}

In conclusion the safety function $S_{\pi}$ is the solution $s$ of the following problem,
\begin{subequations}\label{Dirichlet_pb}
\begin{align}
    (\mathcal{L} (\pi) s )(j) &= 0\text{, } \forall j \in H \label{Eq.Dirichlet1}\\
    s(j) &= 1\text{, }\forall j \in U \label{Eq.Dirichlet2} \\
    s(j) &= 0\text{, }\forall j \in E. \label{Eq.Dirichlet3}
\end{align}
\end{subequations}
The problem \eqref{Dirichlet_pb} is known as the 
Dirichlet problem. Its solution is unique. 
Since \eqref{Dirichlet_pb} is linear in $s$, we formulate it in terms of matrices. To this end, we suppose the state are numbered in the following order: the states in $H$ are first, then in $U$, and finally in $E$. We decompose $P := P(\pi)$ as follows
\begin{align}\label{Eq.DecomposedTransitionMtx}
    P = \begin{bmatrix}
    Q & P_H^U & P_H^E \\
    P_U^H & P_U^U & P_U^E \\
    P_E^H & P_E^U & P_E^E
    \end{bmatrix}.
\end{align}
~\\~

\begin{lem}\label{Lemma.MatrixSafety}
Suppose that the Markov decision process $(X_t)$ with a policy $\pi$ is transient on $H$. 
Let 
\begin{align}\label{Eq.RHSsafety}
    K_{\pi} := P_H^U(\pi) \mathds{1}_{U},
\end{align}
where $\mathds{1}_{U}$ is the vector of $1$s in $U$. 
Then the safety function is given by
\begin{align}\label{Eq.SafetyEquation}
    S_{\pi}|_H = G(\pi) K_{\pi},
\end{align}
and it is the solution of
\begin{align}\label{Eq.FixedPointSafety}
    S_{\pi}|_H = Q(\pi) S_{\pi}|_{H} + K_{\pi}.
\end{align}
Furthermore, the sequence $(S_{\pi}^n)$ defined by
\begin{align}\label{Eq.SafetyAsDynamicSystem}
    S_{\pi}^{n+1} = Q(\pi) S_{\pi}^n + K_{\pi}
\end{align}
for an arbitrary $S_{\pi}^0$ converges point-wise to $S_{\pi}|_H$.
\end{lem}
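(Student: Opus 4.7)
The plan is to read off everything from the Dirichlet problem \eqref{Dirichlet_pb} together with the block decomposition \eqref{Eq.DecomposedTransitionMtx}. The Dirichlet problem already establishes that $S_\pi$ coincides with its unique solution $s$, so the three claims reduce to algebraic manipulations on $s$ plus a transience argument.

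First I would restrict \eqref{Eq.Dirichlet1} to the block $H$. Writing $\mathcal{L}(\pi)=P(\pi)-I$ and using the decomposition of $P(\pi)$ in \eqref{Eq.DecomposedTransitionMtx}, the $H$--row of $P(\pi)s$ equals $Q(\pi)s|_H + P_H^U(\pi)s|_U + P_H^E(\pi)s|_E$. Substituting the boundary data $s|_U=\mathds{1}_U$ and $s|_E=0$ from \eqref{Eq.Dirichlet2}--\eqref{Eq.Dirichlet3}, the condition $(\mathcal{L}(\pi)s)|_H=0$ collapses to
\begin{align*}
s|_H = Q(\pi)s|_H + P_H^U(\pi)\mathds{1}_U = Q(\pi)s|_H+K_\pi,
\end{align*}
which is \eqref{Eq.FixedPointSafety}. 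Rearranging, $(I_H-Q(\pi))S_\pi|_H=K_\pi$. Under the transience hypothesis the Neumann series $G(\pi)=\sum_{k=0}^\infty Q(\pi)^k$ converges and inverts $I_H-Q(\pi)$, so left-multiplying by $G(\pi)$ yields \eqref{Eq.SafetyEquation}.

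For the iterative claim, I would simply unfold the recursion: induction gives
\begin{align*}
S_\pi^n = Q(\pi)^n S_\pi^0 + \sum_{k=0}^{n-1} Q(\pi)^k K_\pi.
\end{align*}
Since $K_\pi\ge 0$ and entries are bounded by $1$, the partial sums $\sum_{k=0}^{n-1}Q(\pi)^k K_\pi$ increase to $G(\pi)K_\pi=S_\pi|_H$ by definition \eqref{Eq.DefGreen}. The remaining term $Q(\pi)^n S_\pi^0$ must be shown to converge to $0$ pointwise, which is where the main (though mild) obstacle lies when $H$ is countably infinite: one cannot appeal to a matrix norm directly. I would handle it by noting that $(Q(\pi)^n \mathds{1}_H)(i)=\mathds{P}_\pi^i[X_n\in H,\;X_1,\dots,X_{n-1}\in H]$ tends to $0$ as $n\to\infty$ by transience on $H$, and then dominating $|Q(\pi)^n S_\pi^0(i)|\le \|S_\pi^0\|_\infty\,(Q(\pi)^n\mathds{1}_H)(i)$ for any bounded $S_\pi^0$ (for a general $S_\pi^0$ the statement should be read with this standing assumption, as otherwise even the initial term need not be finite). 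This yields pointwise convergence $S_\pi^n\to S_\pi|_H$ and closes the proof.
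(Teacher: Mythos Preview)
Your proof is correct and follows essentially the same approach as the paper: reduce the Dirichlet problem \eqref{Dirichlet_pb} to the $H$-block via the decomposition \eqref{Eq.DecomposedTransitionMtx}, relate $S_\pi|_H$ and $G(\pi)K_\pi$ through the Neumann identity, and deduce convergence of the iteration from $Q(\pi)^n\to 0$. The only cosmetic differences are that the paper derives \eqref{Eq.SafetyEquation} first and then \eqref{Eq.FixedPointSafety} via \eqref{Eq.RelationGreenGenerator}, and it justifies $Q(\pi)^n\to 0$ by the spectral-radius argument (eigenvalues in the open unit disk) rather than your transience bound $|Q(\pi)^nS_\pi^0|\le\|S_\pi^0\|_\infty\,Q(\pi)^n\mathds{1}_H$, which is arguably the cleaner route when $H$ is merely countable.
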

 ~\\~
 
\begin{proof}
We apply the transition matrix $P(\pi)$ in \eqref{Eq.DecomposedTransitionMtx} to the Dirichlet problem~\eqref{Dirichlet_pb} to get \eqref{Eq.SafetyEquation}. 
From  \eqref{Eq.SafetyEquation} and \eqref{Eq.RelationGreenGenerator}, we have \eqref{Eq.FixedPointSafety}.

We regard ~\eqref{Eq.SafetyAsDynamicSystem} as a discrete time dynamical systems, and observe that the eigenvalues of $Q(\pi)$ are in the open unit disk. Consequently, the sequence $(S_{\pi}^n)$ converges to $G(\pi) K_{\pi}$. 
\end{proof}
~\\~

We extend the safety function to act on subsets of the taboo set $H$. For $A \subset H$, we define 
\begin{align*}
S_{\pi}(A) := \max_{j \in A}  S_{\pi}(j) .    
\end{align*}
We think about the set $A$ as the set of the start-points of the process. Specifically, from Lemma~\ref{Lemma.MatrixSafety},
\begin{align*}
    S_{\pi}(A) \leq p \iff I_A G(\pi) K_{\pi} \leq p \mathds{1}_A.
\end{align*}

In the last part of this section, we ask the question of what is the safest policy, or what 
is the minimum $S^*$ of the set of safety functions $S_{\pi}$ for $\pi \in \mathcal{D}^{H}$.
Having reformulated safety into dynamic programming, we can answer this question making use of Lemma~\ref{Lemma.RLAlgorithms}, or Lemma~\ref{SupremumValueunction}. Specifically, from Lemma~\ref{Lemma.RLAlgorithms}, the function $S^{*}|_{H}$ is the coordinate-wise limit of the sequence $(S^n)$ defined by
\begin{align}\label{Eq.IterativeBellaman0}
    S^{n+1} = \min_{\pi \in \mathcal{D}^{H}} \left[ K_{\pi} + Q(\pi)S^n \right]
\end{align}
with an arbitrary initial condition $S^0 \geq 0$. 

Furthermore, form  Lemma~\ref{SupremumValueunction}, $S^*|_H$ is the supreme of the functions $S$ that satisfy the following inequality
\begin{align*}
    \Delta(\pi) S \leq K_{\pi} \hbox{ for all } \pi \in \mathcal{D}^{H}. 
\end{align*}

\section{Safe Dynamic Programming}\label{Sec.safeDynamicProgramming}
We combine safety and dynamic programming to formulate a safe dynamic programming.

For $0 \leq p \leq 1$, we strive to find the minimum $V^*(i)$ of the cost
\begin{align}\label{Eq.Optimize}
    V_{\pi}(i) := \mathds E_{\pi} \left[\sum_{t=0}^{\tau} R_t| X_0 = i\right],
\end{align}
subject to
\begin{align}\label{Eq.WithSafety}
S_{\pi}(i) \leq p    
\end{align}
for $i \in H$. 
We combine Lemma~\ref{RelationGreenAndValue} with Lemma~\ref{Lemma.MatrixSafety} in the following theorem.
~\\~

 \begin{theorem}\label{Thm.ConstraintOptimisation}
 
Let $R_{\pi} := \mathrm{diag}_1( \pi \rho)|_H$ and $K_{\pi} := P_H^U(\pi) \mathds{1}_{U}$.
Let $\lambda:E \to \mathds{R}_{\geq 0}$.

 Suppose that the sequences of the costs $(V_{\pi})$ and safety functions $(S_{\pi})$ are define by
\begin{subequations}\label{Safety_pb}
\begin{align}
    V_{\pi} & := Q(\pi) V_{\pi} + R_{\pi}\label{SubEq.Safety} \\
     S_{\pi} & := Q(\pi) S_{\pi} + K_{\pi}. \label{SubEq.Optimality} 
\end{align}
\end{subequations}

Furthermore, let $L$ be the Lagrangian,
 \begin{align}\label{Eq.Saddle}
     L(\pi, \lambda) := V_{\pi} + \lambda \circ \left( S_{\pi} - p \mathds{1}_H\right),
 \end{align}
with a vector of Lagrange multipliers $\lambda \in \mathds{R}_{\geq 0}^H$ (a multiplier for each state $i \in H$), where $\circ$ denotes the Hadamard product, and 
 \begin{align*}
     q(\lambda) : = \min_{\pi \in \mathcal{U}^H} L(\pi, \lambda). 
 \end{align*}
 Then if the problem is feasible (there is a policy $\pi$ such that $S_\pi \leq p \mathds{1}_H$), the minimum $V^*$ of the cost \eqref{Eq.Optimize} with the constraint \eqref{Eq.WithSafety} satisfies
 \begin{align*} 
     V^*|_H = q^* \equiv \sup\{q(\lambda)|~\lambda \geq 0\}.
 \end{align*}
 \end{theorem}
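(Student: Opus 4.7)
The plan is to prove $V^*|_H = q^*$ via the two standard inequalities of convex duality: weak duality $q^* \leq V^*|_H$, which is immediate from the Lagrangian structure, and strong duality $V^*|_H \leq q^*$, obtained from the linear-programming reformulation of the constrained MDP as in \cite{Altman99constrainedmarkov}.

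\emph{Weak duality.} For any feasible $\pi$ (i.e., $S_\pi \leq p\mathds{1}_H$) and any $\lambda \geq 0$, the Hadamard product $\lambda \circ (S_\pi - p \mathds{1}_H) \leq 0$ componentwise, so $L(\pi, \lambda) \leq V_\pi$. Consequently $q(\lambda) \leq L(\pi, \lambda) \leq V_\pi$, and minimizing the right-hand side over feasible $\pi$ and taking supremum over $\lambda \geq 0$ yields $q^* \leq V^*|_H$.

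\emph{Strong duality.} Since $L(\pi, \lambda)(i)$ depends on $\lambda$ only through $\lambda(i)$, the identity $V^*(i) = q^*(i)$ can be established one coordinate at a time. Fix $i \in H$ and recast $\min_\pi\{V_\pi(i) : S_\pi(i) \leq p\}$ as a linear program over state--action occupation variables $\gamma(j,u) \geq 0$ issuing from $i$: by Proposition~\ref{RelationGreenAndValue} the objective $\sum_{j,u} \gamma(j,u) \rho(u,j)$ equals $V_\pi(i)$; by Lemma~\ref{Lemma.MatrixSafety} the constraint $\sum_{j,u}\gamma(j,u) k(u,j) \leq p$, with $k(u,j) = \sum_{l \in U} p_{jul}$, equals $S_\pi(i) \leq p$; and the linear balance equations on $\gamma$ encode the Markov structure via the evolution equation~\eqref{ev_eq}. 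Feasibility of the MDP plus transience of $(X_t)$ on $H$ ensure this LP is feasible with bounded optimum.

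Form the LP dual with variables $V \in \mathds{R}^H$ (for the balance equations) and a scalar $\lambda(i) \geq 0$ (for the safety inequality); the dual reads: maximize $V(i) - p\lambda(i)$ subject to $\Delta(\pi)V \leq R_\pi + \lambda(i) K_\pi$ for all $\pi \in \mathcal{D}^H$. For fixed $\lambda(i)$, Lemma~\ref{SupremumValueunction} applied to the auxiliary MDP with reward $R_\pi + \lambda(i) K_\pi$ gives $\sup V(i) = \min_\pi [V_\pi(i) + \lambda(i) S_\pi(i)]$. Hence the full dual value equals $\sup_{\lambda(i) \geq 0} \min_\pi [V_\pi(i) + \lambda(i)(S_\pi(i) - p)] = q^*(i)$, and LP strong duality closes the argument. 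The main technical obstacle will be this elimination of the primal-variable $V$ from the dual LP via Lemma~\ref{SupremumValueunction}: one must recognize the $\lambda(i)$-modified rewards $\rho(u,j) + \lambda(i) k(u,j)$ as defining a standard unconstrained MDP whose value function is $G(\pi)(R_\pi + \lambda(i) K_\pi) = V_\pi + \lambda(i) S_\pi$, thereby reducing the dual-LP optimum to the Lagrangian dual $q^*$.
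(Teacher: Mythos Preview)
Your argument is correct but proceeds along a different route from the paper's. You split into weak and strong duality, and for the latter you pass through the occupation-measure linear program of \cite{Altman99constrainedmarkov}: primal variables $\gamma(j,u)$, one scalar dual multiplier $\lambda(i)$ per fixed coordinate $i$, and then Lemma~\ref{SupremumValueunction} to eliminate the dual variable $V$ and recover $q^*(i)$. The paper never introduces occupation measures or a primal LP at all; it stays entirely in function space. It first rewrites the Lagrangian as $\Delta(\pi)\bigl(L(\pi,\lambda)+p\lambda\bigr)=R_\pi+K_\pi\circ\lambda$, then invokes Lemma~\ref{SupremumValueunction} to recast \emph{both} $V^*|_H$ and $q(\lambda)$ as suprema of the linear functional $l-p\lambda$ over (slices of) the single convex set $\mathcal{V}=\bigcap_\pi\{(l,\lambda):\Delta(\pi)l\le R_\pi+K_\pi\circ\lambda\}$, and concludes that $q^*=\sup_\lambda\sup_l(l-p\lambda)=\sup_{(l,\lambda)\in\mathcal V}(l-p\lambda)=V^*|_H$ simply because iterated suprema commute. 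What this buys the paper is brevity and a proof that needs no external LP duality theorem; what your approach buys is a cleaner separation of the two inequalities and an explicit identification of the primal problem, which the paper leaves implicit in the phrase ``we reformulate the original problem''. Note also a small structural difference: the paper works with the vector $\lambda\in\mathds{R}^H_{\ge0}$ and the Hadamard term $K_\pi\circ\lambda$ throughout, whereas you fix a coordinate $i$ and use the scalar $\lambda(i)$ multiplying the whole vector $K_\pi$; the two reconcile because $L(\pi,\lambda)(i)$ depends only on $\lambda(i)$, as you observed.
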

~\\~

 \begin{proof}
We will show that the stated optimisation has no duality gap. To this end, we will formulate an equivalent optimisation problem, which has no duality gap. Then, the statement of the theorem will follow. 
At the outset, we observe that the solution $V_\pi$ of \eqref{SubEq.Safety} is equal the cost in \eqref{Eq.Optimize}, and the solution $S_\pi$ of \eqref{SubEq.Optimality} is the safety function. 
We fix the Lagrangian multiplier $\lambda$, 
 
 \begin{align*}
     L(\pi, \lambda) = G(\pi) \left(R_{\pi} + (K_{\pi} - p \Delta(\pi)\mathds{1}_H) \circ \lambda)\right)
 \end{align*}
 Consequently,
 \begin{align*}
     \Delta(\pi) ( L(\pi, \lambda)  + p  \lambda ) = R_{\pi} + K_{\pi} \circ \lambda. 
 \end{align*}
Employing Lemma~\ref{SupremumValueunction}, we reformulate the original problem of finding the minimum of the cost \eqref{Eq.Optimize} with the constraint \eqref{Eq.WithSafety} as the following optimisation. Let
  \begin{align*}
    \mathcal{V}_{\pi} := \{ (l,\lambda) \in \mathds{R}_{\geq 0}^E \times R_{\geq 0}^E |~\Delta(\pi) l \leq R_{\pi} + K_{\pi} \circ \lambda\},
 \end{align*}
 and
 \begin{align*}
 \mathcal{V} =  \bigcap_{\pi \in \mathcal{D}^H} \mathcal{V}_{\pi}.    
 \end{align*}
Then, 
 \begin{align*}
 q(\lambda) = \sup_l (l - p \lambda)
 \end{align*}
 subject to 
 $$
 l \in \mathcal{V} \cap (\mathds{R}^H \times \{\lambda\}).
 $$ 
 Since each of the sets $\mathcal{V}_{\pi}$ is convex, and an arbitrary intersection of convex sets is convex, we conclude that $\mathcal{V}$ is convex. 
 
 We observe that
 \begin{align*}
q^* =  \sup_{\lambda} \sup_l (l - p\lambda)     
 \end{align*}
 subject to $(l,\lambda) \in \mathcal{V}$. 
 By the convexity of $\mathcal{V}$
\begin{align*}
    q^* = \sup_l \sup_{\lambda}  (l - p\lambda)  = \sup_{(l, \lambda)}  (l - p\lambda)  
\end{align*}
subject to $(l,\lambda) \in \mathcal{V}$. In conclusion, there is no duality gap, and $V^*|_H=q^*$.
 $\Box$
 \end{proof}

We have the following corollary from Theorem~\ref{Thm.ConstraintOptimisation}.
~\\~

 \begin{cor}
 The minimum $V^*$ of the cost \eqref{Eq.WithSafety} with the constraint \eqref{Eq.Optimize} satisfies
 \begin{align} \label{Eq.SaddleBellman}
     V^*|_H = \sup_{\lambda \geq 0} \min_{\pi \in \mathcal{D}^H} \left( Q(\pi) V^* +  R_{\pi} + K_{\pi} \circ \lambda - p\Delta(\pi)\lambda \right). 
 \end{align}
 \end{cor}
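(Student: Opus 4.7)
The plan is to combine the no-duality-gap conclusion of Theorem~\ref{Thm.ConstraintOptimisation} with a Bellman-type recursion for the Lagrangian, thereby exhibiting $V^*|_H$ as the saddle value of a perturbed unconstrained MDP. The basic idea is that for each frozen multiplier $\lambda \geq 0$, the inner problem $\min_\pi L(\pi, \lambda)$ is itself an unconstrained dynamic programming problem with a modified one-step reward, to which Lemma~\ref{Lemma.RLAlgorithms} applies directly.

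First, I would invoke Theorem~\ref{Thm.ConstraintOptimisation} to write
\begin{equation*}
V^*|_H \;=\; q^* \;=\; \sup_{\lambda \geq 0} \min_{\pi \in \mathcal{D}^H} L(\pi, \lambda).
\end{equation*}
Next, I would reuse the computation inside the proof of the theorem, namely $\Delta(\pi)\bigl(L(\pi, \lambda) + p\lambda\bigr) = R_\pi + K_\pi \circ \lambda$, and rewrite it using $\Delta(\pi) = I - Q(\pi)$ as the Bellman recursion
\begin{equation*}
L(\pi, \lambda) \;=\; Q(\pi) L(\pi, \lambda) + R_\pi + K_\pi \circ \lambda - p\,\Delta(\pi)\lambda.
\end{equation*}
This identifies $L(\cdot, \lambda)$ as the policy cost of a standard MDP whose one-step reward is $\widetilde R_\pi(\lambda) := R_\pi + K_\pi \circ \lambda - p\,\Delta(\pi)\lambda$.

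Applying Lemma~\ref{Lemma.RLAlgorithms} to this modified MDP, for every fixed $\lambda \geq 0$ the function $q(\lambda) = \min_\pi L(\pi, \lambda)$ satisfies the Bellman equation
\begin{equation*}
q(\lambda) \;=\; \min_{\pi \in \mathcal{D}^H} \bigl[\, Q(\pi) q(\lambda) + R_\pi + K_\pi \circ \lambda - p\,\Delta(\pi)\lambda \,\bigr].
\end{equation*}
Taking $\sup_{\lambda \geq 0}$ of both sides, the left side yields $V^*|_H$ by Theorem~\ref{Thm.ConstraintOptimisation}. For the right side I would use the saddle point $(\pi^*, \lambda^*)$: at $\lambda^*$ one has $q(\lambda^*) = V^*|_H$, so the bracket with $q(\lambda)$ replaced by $V^*|_H$ matches, giving the $\geq$ direction of \eqref{Eq.SaddleBellman}. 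The reverse direction comes from the pointwise domination $q(\lambda) \leq V^*|_H$ for all $\lambda$ together with the monotonicity of $Q(\pi)$ (nonnegative entries), which shows that bumping $q(\lambda)$ up to $V^*|_H$ inside the bracket can only increase each term, so the $\sup$--$\min$ expression with $V^*|_H$ plugged in cannot exceed the value of the saddle.

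The main obstacle I expect is precisely this last step: justifying that $q(\lambda)$ may be replaced by $V^*|_H$ uniformly in $\lambda$ under the outer supremum, rather than only at the optimizer $\lambda^*$. The clean way to handle this is to exploit the saddle-point characterization guaranteed by the absence of a duality gap, showing that $(\pi^*, \lambda^*)$ is a saddle point of the map $(\pi, \lambda) \mapsto Q(\pi) V^*|_H + R_\pi + K_\pi \circ \lambda - p\,\Delta(\pi)\lambda$; the two-sided saddle inequality then gives both directions of \eqref{Eq.SaddleBellman} simultaneously.
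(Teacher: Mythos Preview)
Your approach coincides with the paper's: invoke Theorem~\ref{Thm.ConstraintOptimisation} for $V^*|_H=\sup_{\lambda\ge0}q(\lambda)$, multiply $L(\pi,\lambda)$ by $\Delta(\pi)$ to obtain the recursion $L(\pi,\lambda)=Q(\pi)L(\pi,\lambda)+R_\pi+K_\pi\circ\lambda-p\Delta(\pi)\lambda$, and apply Lemma~\ref{Lemma.RLAlgorithms} to conclude that $q(\lambda)$ satisfies the corresponding Bellman equation. The paper's proof in fact \emph{stops} at that Bellman equation for $q(\lambda)$ and does not spell out the passage from $q(\lambda)$ to $V^*$ inside the bracket; you correctly flag this as the nontrivial step and try to supply it.

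However, your ``reverse direction'' argument is backwards. From $q(\lambda)\le V^*|_H$ and $Q(\pi)\ge0$ you get
\[
F(\lambda):=\min_{\pi}\bigl[Q(\pi)V^*|_H+R_\pi+K_\pi\circ\lambda-p\Delta(\pi)\lambda\bigr]\;\ge\;\min_{\pi}\bigl[Q(\pi)q(\lambda)+\cdots\bigr]=q(\lambda),
\]
hence $\sup_\lambda F(\lambda)\ge\sup_\lambda q(\lambda)=V^*|_H$. This is the \emph{same} inequality you already extracted from evaluating at $\lambda^*$, not the opposite one; monotonicity of $Q(\pi)$ cannot give you $\sup_\lambda F(\lambda)\le V^*|_H$. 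For that direction you need to exhibit, for each $\lambda$, a specific policy witnessing $F(\lambda)\le V^*|_H$; the natural candidate is an optimal feasible $\pi^*$, for which $\Delta(\pi^*)V^*|_H=R_{\pi^*}$, and then one must control the sign of $K_{\pi^*}\circ\lambda-p\Delta(\pi^*)\lambda$ via feasibility/complementary slackness rather than via monotonicity. Your saddle-point idea in the last paragraph is the right instinct here, but it is a separate argument, not a consequence of the monotonicity step.
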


\begin{proof}
 To prove this statement, we observe that for a fixed policy $\pi$, 
 $S_{\pi}|_H = G(\pi) K_{\pi},$ $V_{\pi}|_H = G(\pi) R_{\pi},$ and $\Delta(\pi) G(\pi) = I_H.$
 
 We multiply the equality \eqref{Eq.Saddle} by $\Delta(\pi)$, 
 \begin{align*}
 L(\pi, \lambda) =  \left( Q(\pi) L(\pi, \lambda) +  R_{\pi} + K_{\pi} \circ \lambda - p\Delta(\pi)\lambda \right).     
 \end{align*}
 Hence, for fixed $\lambda$, by Lemma~\ref{Lemma.RLAlgorithms}, the sequence $(q^n(\lambda))$ defined by
  \begin{align*}
 q^{n+1}(\lambda)  =  \min_{\pi \in \mathcal{D}^H} ( Q(\pi) q^n(\lambda) +  R_{\pi} + K_{\pi} \circ \lambda - p\Delta(\pi)\lambda ).     
 \end{align*}
 converges to $q(\lambda)$ for any initial value $q^0(\lambda)$, and
  \begin{align*}
 q(\lambda) =  \min_{\pi \in \mathcal{D}^H} \left( Q(\pi) q(\lambda) +  R_{\pi} + K_{\pi} \circ \lambda - p\Delta(\pi)\lambda \right).     
 \end{align*}
\end{proof}

\section{Algorithms for Safe Dynamic Programming}\label{Section.SafeDynamicProgramming}
 
 We propose two solutions to the safe dynamic programming. The first uses the construction in the proof of Theorem~\ref{Thm.ConstraintOptimisation}. Specifically, for pure policies, i.e., $\pi \in H \times \mathcal{U}$, the proposed optimisation boils down to a linear programming. In the second algorithm, the set of policies is beforehand restricted to those which are safe.

\subsection{Linear Programming for Pure Policies}\label{Subsection.LP}
The proof of Theorem~\ref{Thm.ConstraintOptimisation} was carried out indirectly by reformulating the original problem to an equivalent optimisation and showing that the new optimisation has no duality gap. This new optimisation is motivated by \cite{Altman99constrainedmarkov}, and in the current situation of safe dynamic programming with stopping time-optimisation pronounces
\begin{align*}
    V^* = \sup_{(l,\lambda) \in \mathcal{V}} (l - p\lambda), 
\end{align*}
where 
  \begin{align*}
    \mathcal{V} :=&  \{ (l,\lambda) \in \mathds{R}_{\geq 0}^H \times R_{\geq 0}^H | \\
    & \Delta(\pi) l \leq R_{\pi} + K_{\pi} \circ \lambda \hbox{ for all } \pi \in \mathcal{D}^H\}.
 \end{align*}
For pure policies $\pi \in \mathcal{U}^H$, the above optimisation becomes a linear programming problem. 

 \subsection{Policy Selection}
Also in this subsection, we suppose that the policies are pure. For a $p\in [0, 1]$, we define the set of admissible policies by
 \begin{align*}
     \Pi_p := \{\pi \in H \times \mathcal{U}|~S_{\pi} \leq p\}.
 \end{align*}
 In other words, $\pi \in \Pi_p$ if and only if  there is $x \in \mathds{R}_{\geq 0}^H$ such that  $K_{\pi} = \Delta(\pi)(p - x)$. In the next lemma, we provide another characterisation of the set of admissible policies. 
~\\~

\begin{lem}
Let $f^i_\pi = \Delta(\pi) e^i$, where $e^i$ are the standard basis of $\mathds{R}^H$. Let $M_{\pi} := p \Delta(\pi) \mathds{1}_H - K_{\pi}$. 
The set $\Pi_p$ of admissible policies is
\begin{align*}
    \Pi_p := \{ \pi \in H \times \mathcal{U} |~ M_{\pi} = \sum_{i \in H} \alpha^i f^i_\pi, \hbox{ for some }\alpha^i \geq 0\}.
\end{align*}
\end{lem}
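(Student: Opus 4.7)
The plan is to reformulate the safety condition $S_\pi \leq p \mathds{1}_H$ as an equation of the form $M_\pi = \Delta(\pi) x$ with $x \geq 0$, and then observe that expanding $x$ in the standard basis $\{e^i\}_{i \in H}$ produces precisely the claimed conic combination of the vectors $f^i_\pi = \Delta(\pi)e^i$.

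First I would note that by Lemma~\ref{Lemma.MatrixSafety}, $S_\pi|_H$ satisfies $\Delta(\pi) S_\pi = K_\pi$ (which follows from $S_\pi = Q(\pi)S_\pi + K_\pi$). Hence, the admissibility condition $\pi \in \Pi_p$, i.e., $S_\pi \leq p \mathds{1}_H$, is equivalent to the existence of some $x \in \mathds{R}^H_{\geq 0}$ with $x = p \mathds{1}_H - S_\pi$, and applying $\Delta(\pi)$ to both sides gives
\begin{equation*}
\Delta(\pi) x = p \Delta(\pi) \mathds{1}_H - K_\pi = M_\pi.
\end{equation*}

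For the forward direction, given such an $x \geq 0$, I would expand $x = \sum_{i \in H} \alpha^i e^i$ with $\alpha^i = x(i) \geq 0$ and use linearity of $\Delta(\pi)$ to write $M_\pi = \Delta(\pi) x = \sum_{i \in H} \alpha^i f^i_\pi$, placing $\pi$ in the claimed set. For the converse direction, suppose $M_\pi = \sum_{i \in H} \alpha^i f^i_\pi$ with $\alpha^i \geq 0$; define $x := \sum_{i \in H} \alpha^i e^i \geq 0$, so $\Delta(\pi) x = M_\pi = \Delta(\pi)(p \mathds{1}_H - S_\pi)$. Here I would invoke invertibility of $\Delta(\pi)$, which holds because the chain is transient on $H$ so that $G(\pi) = \Delta(\pi)^{-1}$ exists (see \eqref{Eq.DefGreen} and \eqref{Eq.RelationGreenGenerator}); this forces $x = p \mathds{1}_H - S_\pi$, whence $S_\pi = p \mathds{1}_H - x \leq p \mathds{1}_H$ and $\pi \in \Pi_p$.

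There is no substantial obstacle; the only subtle point is ensuring that the $\alpha^i$ obtained from the conic expansion truly equal the coordinates of a non-negative vector, which is guaranteed by the linear independence of the standard basis and the injectivity of $\Delta(\pi)$. The proof is essentially a translation of the affine inequality $S_\pi \leq p \mathds{1}_H$ through the invertible operator $\Delta(\pi)$ into a membership statement in the cone generated by $\{f^i_\pi\}_{i \in H}$.
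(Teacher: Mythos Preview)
Your proposal is correct and follows essentially the same route as the paper: both arguments start from the characterization $\pi\in\Pi_p \iff p\mathds{1}_H - S_\pi \geq 0$, recognize that non-negativity is precisely membership in the cone generated by $\{e^i\}_{i\in H}$, and then push this through the invertible operator $\Delta(\pi)$ to obtain $M_\pi$ in the cone generated by $\{f^i_\pi\}$. Your write-up is in fact more careful than the paper's, since you make the converse direction and the role of the invertibility of $\Delta(\pi)$ (via transience and $G(\pi)=\Delta(\pi)^{-1}$) explicit, whereas the paper leaves this implicit.
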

~\\~
 
 \begin{proof}
The system is $p$-safe if only if $p \mathds{1}_H - G(\pi) K_{\pi} \geq 0$, or equivalently
\begin{align*}
    p \mathds{1}_H - G(\pi) K_{\pi} \in  \mathrm{conv} \{e^i_\pi |~i \in H\}.
\end{align*}

We re-write $p$ safety as
 \begin{align*}
M_{\pi} \in \mathrm{conv} \{f^i_\pi |~i \in H\}.
 \end{align*}
 \end{proof}
~\\~

 Consequently, by Lemma~\ref{Lemma.RLAlgorithms}, the value function $V^*|_H$ is the solution of
\begin{equation} \label{eq.BellmanEquationcompact0}
    V = \min_{\pi \in  \Pi_p} \left[ R_{\pi} + Q(\pi)V \right].
\end{equation}
Furthermore, the function $V^{*}|_{H}$ is the (coordinate-wise) limit of the sequence $(V^n)$ defined by
\begin{align}\label{Eq.IterativeBellaman}
    V^{n+1} = \min_{\pi \in \Pi_p} \left[ R_{\pi} + Q(\pi)V^n \right].
\end{align}

Alternatively,  the value function $V^*$
satisfies
\begin{align}\label{Eq.ValueSupremumFormula}
    V^*|_H= \sup \mathcal{V},
\end{align}
where
\begin{align*}
    \mathcal{V} := \{V \in \mathds{R}^H |~\Delta(\pi) V \leq R_{\pi} \hbox{ for all } \pi \in \Pi_p\}.
\end{align*}

\section{Locality and Relative Safety} \label{Sec.LocalSafeDynamicProg}

 In the remaining part of this article, we briefly discuss the locality of dynamic programming. Subsequently, we define an alternative concept of safety, for which we propose safe dynamic programming.
 
 We say that a decision algorithm is local if the policy's choice at the state $i$ leans only upon the local information. Specifically, the Bellman's equation in \eqref{eq.BellmanEquationcompact} 
 writes
 \begin{align*}
 V(i) = \min_{d \in \mathcal{D}} \left[ \sum_{u \in \mathcal{U}} \left(\rho(u,i)  + \sum_{j \in H}p_{ij}(u)  V(j) \right) d(u) \right].    
 \end{align*}
 In other words, to compute the distribution of the control at the state $i$, only information ``exchange'' from the state $i$ to its neighbours is necessary. Here, a neighbour is any state $j$ where the transition probability $p_{ij}$ is nonzero.
 
 Similarly, the algorithm for computing the safety function, with a fixed policy $\pi =\{ d(i) \in \mathcal{D} \}_{i \in H}$,
 \begin{align*}
     S_{\pi}(i) = \sum_{u \in \mathcal{U}} \left(  \sum_{k \in U} p_{ik}(u) + \sum_{j \in H}p_{ij}(u)  S_{\pi}(j) \right) d(u)
 \end{align*}
 is local. The above local formulations are in contrast with the safe dynamic programming algorithms proposed in Section~\ref{Section.SafeDynamicProgramming}. Those ``global'' approaches might be prohibitive for large state spaces. Therefore, we propose a local approach to safe dynamic programming leaning upon a modified concept of safety. 

In definition~\eqref{Def.RelativeSafety} below, for a non-negative real number $q$, we define $q$-relative safety. Shortly, $q$ is the ratio of the probability of hitting the forbidden set $U$ to the probability of reaching the target set $E$. 
~\\~

\begin{defn}\label{Def.RelativeSafety}
Let $K_{\pi} := P_H^U(\pi) \mathds{1}_{U}$ and $L_{\pi} = P_H^E(\pi) \mathds{1}_{E}$, where $P_H^U(\pi)$ and $P_H^E(\pi)$ are the transition probability matrices in \eqref{Eq.DecomposedTransitionMtx}.

Let $q \in \mathds{R}_{\geq 0}$. The Markov decision process with a policy $\pi$ is $q$-relatively safe if and only if
\begin{align}\label{Eq.relativeSafety}
    K_{\pi} \leq q L_{\pi}. 
\end{align}
\end{defn}
~\\~

We observe that the notion of $q$-relative safety is local since the evaluation of whether the process is $q$-relatively safe at a state $i$ can be determined with the help of information available from the neighbour states.

The probability of hitting first the forbidden set $U$ is the safety function $S_\pi$ defined in Section~\ref{Section.Safety},
\begin{align*}
S_{\pi} = G(\pi) K_{\pi}.
\end{align*}
By exchanging the roles of the sets $U$ and $E$, the probability of reaching the target set $E$ first is given by
\begin{align*}
T_{\pi} = G(\pi) L_{\pi}.     
\end{align*}
Since $G(\pi)$ is a positive (possibly infinite dimensional) matrix, we conclude that if the process is $q$-relatively safe then
\begin{align*}
    S_{\pi} \leq q T_{\pi}. 
\end{align*}
In the next lemma, we relate $q$-relative safety with $p$-safety. 
~\\~

\begin{lem}
A Markov chain is $p$-safe only if it is $q$-relatively safe with $q = \frac{p}{1-p}$.
\end{lem}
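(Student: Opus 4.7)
The plan is to anchor the argument on the identity $S_{\pi} + T_{\pi} = \mathds{1}_H$, which expresses that, under the assumed transience on $H$, the chain starting in $H$ almost surely exits through either $U$ or $E$, and the two exit events are disjoint. Under the $p$-safety hypothesis $S_{\pi} \leq p\,\mathds{1}_H$, this identity immediately gives $T_{\pi} \geq (1-p)\,\mathds{1}_H$, so that the global inequality $(1-p) S_{\pi} \leq p\,T_{\pi}$ holds on $H$.

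From here, I would exploit the fixed-point characterisation from Lemma~\ref{Lemma.MatrixSafety}: $S_{\pi} = Q(\pi) S_{\pi} + K_{\pi}$ and, by the symmetric construction with the roles of $U$ and $E$ swapped, $T_{\pi} = Q(\pi) T_{\pi} + L_{\pi}$. These rewrite as $K_{\pi} = \Delta(\pi) S_{\pi}$ and $L_{\pi} = \Delta(\pi) T_{\pi}$. Substituting $T_{\pi} = \mathds{1}_H - S_{\pi}$ then yields the algebraic identity
\[
(1-p)\,K_{\pi} - p\,L_{\pi} \;=\; \Delta(\pi)\bigl((1-p) S_{\pi} - p\,T_{\pi}\bigr) \;=\; \Delta(\pi)(S_{\pi} - p\,\mathds{1}_H).
\]
The desired conclusion $K_{\pi} \leq q L_{\pi}$ with $q = p/(1-p)$ is equivalent to the left-hand side being componentwise $\leq 0$; since $S_{\pi} - p\,\mathds{1}_H \leq 0$ by hypothesis, the lemma reduces to showing that $\Delta(\pi)$ preserves this sign on the vector $S_{\pi} - p\,\mathds{1}_H$.

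The main obstacle is precisely this last step: $\Delta(\pi) = I - Q(\pi)$ is not a positivity-preserving operator, so a componentwise inequality $v \leq 0$ does not in general propagate to $\Delta(\pi) v \leq 0$. A quick check on a two-state taboo chain in which one state leaks only to $U$ and the other only to $E$ shows that $p$-safety alone does not enforce the pointwise condition $K_{\pi} \leq q L_{\pi}$, so the stated implication seems to demand an additional structural hypothesis. I would therefore spend my effort either isolating that missing hypothesis (for instance, a uniform irreducibility condition that lets one transfer the global bound $(1-p) S_{\pi} \leq p\,T_{\pi}$ to its pointwise one-step analogue) or reinterpreting the lemma as the converse implication, for which the same algebraic identity together with the positivity of $G(\pi)$ closes the argument in one line via $S_{\pi} = G(\pi) K_{\pi} \leq q\,G(\pi) L_{\pi} = q\,T_{\pi}$ combined with $S_{\pi} + T_{\pi} = \mathds{1}_H$.
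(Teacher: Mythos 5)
Your diagnosis is exactly right, and it exposes a mismatch between the lemma's wording and what is actually provable. Read literally, ``$p$-safe only if $q$-relatively safe'' asserts the implication $S_{\pi}\leq p\,\mathds{1}_H \Rightarrow K_{\pi}\leq q L_{\pi}$, and this is false: your two-state sketch is a genuine counterexample (take a taboo state $a$ with $a\to U$ with probability $\alpha\leq p$ and $a\to b$ otherwise, and $b\to E$ with probability $1$; then $L_{\pi}(a)=0$ while $K_{\pi}(a)=\alpha>0$, so $K_{\pi}\leq qL_{\pi}$ fails at $a$ for every finite $q$, yet the chain is $p$-safe). Your identity $(1-p)K_{\pi}-pL_{\pi}=\Delta(\pi)\left(S_{\pi}-p\,\mathds{1}_H\right)$ pinpoints the obstruction precisely: $\Delta(\pi)=I-Q(\pi)$ has nonpositive off-diagonal entries, so it does not propagate the componentwise sign of $S_{\pi}-p\,\mathds{1}_H$, and no mild irreducibility assumption repairs this, since the failure occurs whenever some taboo state has $L_{\pi}(i)=0$ but a positive probability of eventually reaching $U$.

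What you offer as a fallback ``reinterpretation'' is in fact the argument the paper carries out. The paper's proof derives $\mathds{1}_H=S_{\pi}+T_{\pi}$, rewrites $S_{\pi}-p\,\mathds{1}_H=(1-p)S_{\pi}-pT_{\pi}$ to obtain the equivalence $S_{\pi}\leq p\,\mathds{1}_H \iff S_{\pi}\leq \tfrac{p}{1-p}\,T_{\pi}$, and then closes by noting that $q$-relative safety implies $S_{\pi}\leq qT_{\pi}$ via the positivity of $G(\pi)$ --- exactly your one-liner $S_{\pi}=G(\pi)K_{\pi}\leq q\,G(\pi)L_{\pi}=qT_{\pi}$. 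That chain of implications runs from $q$-relative safety to $p$-safety, not the other way, so the lemma should be read (or restated) as: if the chain is $q$-relatively safe with $q=p/(1-p)$, then it is $p$-safe. Your proposal is therefore not missing anything; it correctly identifies that the forward direction is unprovable and supplies, by the same route as the paper, the proof of the direction that is true.
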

~\\~

\begin{proof}
We compress the notation by suppressing the explicit reference to the policy. We observe that
\begin{align*}
    \mathds{1}_H = P_H^H \mathds{1}_H + P_H^U \mathds{1}_U + P_H^E \mathds{1}_E.
\end{align*}
Hence,
\begin{align*}
    \Delta \mathds{1}_H  = K_{\pi} + L_{\pi},
\end{align*}
and
\begin{align*}
    \mathds{1}_H  = S + T.
\end{align*}
Consequently,
\begin{align*}
    S-p\mathds{1}_H = (1-p) S - p T,
\end{align*}
and
\begin{align*}
   S \leq p \mathds{1}_H ~\hbox{ if and only if }~ S - \frac{p}{1-p} T \leq 0. 
\end{align*}
On the other hand, $q$-relative safety implies $ S_{\pi} \leq q T_{\pi}.$~\\~
\end{proof}
~\\~

We conclude this section by re-formulating the safe dynamic programming using the newly established concept of $q$-relative safety. For $q \geq 0$, we compute 
\begin{align*}
   V^*(i) = \min_{\pi} \mathds{E}_{\pi}\left[ \sum_{t=0}^{\tau} R_t |~X_0 = i \right],
\end{align*}
where the policies $\pi$ are $q$-relatively safe. 
Using the relation~\eqref{Eq.relativeSafety}, we define the set of admissible policies with respect to $q$-relative safety,
 \begin{align*}
     \overline{\Pi}_q := \{\pi \in \mathcal{D}^H|~K_{\pi} \leq q L_{\pi}\}.
 \end{align*}
As a consequence, $V^*|_H$ is the solution of the Bellman's equation
\begin{equation*} 
    V = \min_{\pi \in \overline{\Pi}_q} \left[ R_{\pi} + Q(\pi)V \right].
\end{equation*}

Furthermore, the function $V^{*}|_{H}$ is the limit of the sequence $(V^n)$ defined by
\begin{align*}
    V^{n+1} = \min_{\pi \in \overline{\Pi}_q} \left[ R_{\pi} + Q(\pi)V^n \right]
\end{align*}
with an arbitrary initial condition $V^0 \geq 0$. 

\section{Illustrative Example}\label{Sec.Example}
To illustrate the usage of the central concepts developed in this paper, we consider the following simple example. Suppose that the state space is $\mathcal X = \{a, b, c, d, e\}$, where $d$ is the forbidden state, $e$ is the goal. Consequently, the taboo set is $H = \{a, b, c \}.$ We also suppose that the only nonzero transition probabilities are $p_{ba}(u)$, $p_{bc}(u)$, $p_{ca}(u)$, $p_{ad}(u)$, $p_{ae}(u)$ for $u \in \{u_1, u_2\}$, and $p_{bc}(u_1) > p_{bc}(u_2)$, $p_{ae}(u_1) \geq 1/3$. Specifically, $p_{ca}(u) = 1$, $u \in \{u_1, u_2\}$. As the consequence, the transition matrix $Q$ is
\begin{align*}
    Q(\pi) = \begin{bmatrix}
    0 & 0 & 0 \\ p_{ba}(\pi) & 0 & p_{bc}(\pi) \\ 1 & 0 & 0
    \end{bmatrix},
\end{align*}
where the transition probabilities in the entries of $Q$ are
\begin{align*}
    p_{ba}(\pi) &= p_{ba}(u_1) \pi_{b u_1} + p_{ba}(u_2) \pi_{b u_2}, \\
     p_{bc}(\pi) &= p_{bc}(u_1) \pi_{b u_1} + p_{bc}(u_2) \pi_{b u_2}.
\end{align*}
Without mentioning, we use the observation that $p_{ba}(\pi) + p_{bc}(\pi) = 1$.

Let $q=2$. Then the $q$-relative safety implies
\begin{align*}
    p_{ad}(\pi) \leq 2 p_{ae} (\pi).
\end{align*}
After unfolding the transition probabilities,
\begin{align*}
    p_{ad}(u_1)\pi_{a u_1} + p_{ad}(u_2)\pi_{a u_2} \leq 2( p_{ae}(u_1)\pi_{a u_1} + p_{ae}(u_2)\pi_{a u_2} ), 
\end{align*}
and hence, 
\begin{align*}
   (p_{ae}(u_1) + p_{ae}(u_2)) \pi_{a u_1} \geq p_{ae}(u_2) + 1/3. 
\end{align*}
Specifically, for $\pi_{a u_1} = 1$, the process is $q$-relatively safe. 

We compute the safety function,
\begin{align*}
    S_{\pi}|_{H} = G(\pi) K_{\pi} = \begin{bmatrix}
    1 & 0 & 0 \\ 1 & 1 & p_{bc}(\pi) \\ 1 & 0 & 1
    \end{bmatrix}
    \begin{bmatrix}
    p_{ad}(\pi) \\ 0 \\ 0
    \end{bmatrix} = p_{ad}(\pi) \begin{bmatrix}
    1 \\ 1 \\ 1
    \end{bmatrix}.
\end{align*}
Specifically, for $\pi_{a u_1} = 1$, the process is $p = 2/3$ safe.

We suppose that the reward function $\rho(a,u) = 1$, $\rho(b,u) = 2$, $\rho(c,u) = 3$, and $\rho(d,u) = \rho(e,u) = 0$ for all actions $u$ in  $\{u_1, u_2\}$. Hence $R := R_{\pi} = \begin{bmatrix}1 & 2 & 3 \end{bmatrix}^\mathrm{T}$. We remark that $Q(\pi)^3 = 0$, and
\begin{align*}
    G(\pi)R &= Q(\pi) Q(\pi) R + Q(\pi) R + R \\
    &= \begin{bmatrix}
    1 & 2 + p_{ba}(\pi) + 4 p_{bc}(\pi)  & 4 
    \end{bmatrix}^{\mathrm{T}} \\
    &= \begin{bmatrix}
    1 & 3 + 3 p_{bc}(\pi)  & 4 
    \end{bmatrix}^{\mathrm{T}}
    .
\end{align*}
The minimizing policy is  $\pi_{b u_2} = 1$, and 
$$\min_{\pi}G(\pi)R = \begin{bmatrix}  1 & 3+3p_{bc}(u_2)  & 4 \end{bmatrix}^{\mathrm{T}}.$$

Lastly, we use the iterative procedure for finding the solution to Bellman's equation. We start with $V^0 = 0$. Consequently, $V^1 = \begin{bmatrix}  1 & 2  & 3 \end{bmatrix}^{\mathrm{T}}$, and
\begin{align*}
V^2 &= \min_\pi
\begin{bmatrix}  1 & 3+2p_{bc}(\pi)  & 4 \end{bmatrix}^{\mathrm{T}} = \begin{bmatrix}  1 & 3+2p_{bc}(u_2)  & 4 \end{bmatrix}^{\mathrm{T}}, \\
V^3 &= \min_\pi
\begin{bmatrix}  1 & 3+3p_{bc}(\pi)  & 1 \end{bmatrix}^{\mathrm{T}} = \begin{bmatrix}  1 & 3+3p_{bc}(u_2)  & 4 \end{bmatrix}^{\mathrm{T}}, \\
V^4 & =  \begin{bmatrix}  1 & 3+3p_{bc}(u_2)  & 4 \end{bmatrix}^{\mathrm{T}}.
\end{align*}

In conclusion, the example shows how to compute $p$-safety, $q$-relative safety, and minimizing policy. We illustrate that in all three computations, the Green operator plays a crucial role.

\section{Conclusion}
In this work, we have reformulated dynamic programming for Markov decision processes in terms of the evolution equation and the Green kernel. We have used this formulation to devise dynamic programming for computing safety. The main contribution is establishing a setting for safe dynamic programming - a dynamic program that adheres to safety specifications.

\appendices

\bibliographystyle{IEEEtran} 
\bibliography{autosam}


\end{document}